\setlist{topsep=3pt,partopsep=0pt,itemsep=1pt,parsep=0pt}
\numberwithin{equation}{section}
\newtheorem{Theorem}{Theorem}[section]
\newtheorem{Proposition}{Proposition}[section]
\newtheorem{Conjecture}{Conjecture}[section]
\newtheorem{Remark}{Remark}[section]
\newtheorem{Lemma}{Lemma}[section]
\newtheorem{Definition}{Definition}[section]
\newtheorem{Claim}{Claim}[section]
\begin{document}

\title{Variations on Bollob\'{a}s systems of $d$-partitions}

\author[a]{Yu Fang}
\author[a]{Xiaomiao Wang}
\author[b]{Tao Feng}
\affil[a]{School of Mathematics and Statistics, Ningbo University, Ningbo 315211, P. R. China}
\affil[b]{School of Mathematics and Statistics, Beijing Jiaotong University, Beijing 100044, P. R. China}
\affil[ ]{fangymath@163.com; wangxiaomiao@nbu.edu.cn; tfeng@bjtu.edu.cn}
\renewcommand*{\Affilfont}{\small\it}
\renewcommand\Authands{ and }
\date{}

\maketitle

\footnotetext{Supported by Zhejiang Provincial Natural Science Foundation of China under Grant LY24A010001 (X. Wang), and by NSFC under Grant 12271023 (T. Feng).}

\begin{abstract}
This paper investigates five kinds of systems of $d$-partitions of $[n]$, including symmetric Bollob\'{a}s systems, strong Bollob\'{a}s systems, Bollob\'{a}s systems, skew Bollob\'{a}s systems, and weak Bollob\'{a}s systems. Many known results on variations of Bollob\'{a}s systems are unified. Especially we give a negative answer to a conjecture on Bollob\'{a}s systems of $d$-partitions of $[n]$ that was presented by Heged\"{u}s and Frankl [European J. Comb., 120 (2024), 103983]. Even though this conjecture does not hold for general Bollob\'{a}s systems, we show that it holds for strong Bollob\'{a}s systems of $d$-partitions of $[n]$.
\end{abstract}

\section{Introduction}

Throughout this paper, for a positive integer $n$, write $[n]=\{1,\dots,n\}$. Motivated by a problem in graph theory, Bollob\'{a}s \cite{B} introduced the concept of set-pair systems in 1965.

\begin{Definition}\label{def-pair}
Let $\mathcal{P}=\{(A_i^{(1)},A_i^{(2)})\}_{1\le i\le m}$, where $A_i^{(1)},A_i^{(2)}\subseteq [n]$ and $A_i^{(1)}\cap A_i^{(2)}=\emptyset$ for all $i\in[m]$. Then

\begin{itemize}
  \item $\mathcal{P}$ is called a \emph{Bollob\'{a}s system} if $A_i^{(1)}\cap A_j^{(2)}\ne\emptyset$ for any $1\le i, j\le m$ and $i\ne j$.
  \item $\mathcal{P}$ is called a \emph{skew Bollob\'{a}s system} if $A_i^{(1)}\cap A_j^{(2)}\ne\emptyset$ for any $1\le i<j\le m$.
\end{itemize}
\end{Definition}

Clearly a Bollob\'{a}s system is also a skew Bollob\'{a}s system. Bollob\'{a}s \cite{B} proved the following inequality using a combinatorial approach.

\begin{Theorem}{\rm \cite{B}} \label{B-pair}
Let $\mathcal{P}=\{(A_i^{(1)},A_i^{(2)})\}_{1\leq i\leq m}$ be a Bollob\'{a}s system. Then
$$\sum_{i=1}^m\binom{|A_i^{(1)}|+|A_i^{(2)}|}{|A_i^{(1)}|}^{-1}\leq 1.$$
\end{Theorem}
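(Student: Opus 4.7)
The plan is to use Katona's probabilistic reformulation of Bollob\'as's original double-counting argument. Let $\pi$ be a uniformly random permutation of $[n]$, and for each $i \in [m]$ define the event
$$E_i = \{\pi(x) < \pi(y) \text{ for every } x \in A_i^{(1)} \text{ and every } y \in A_i^{(2)}\}.$$
First I would compute $\Pr[E_i]$. Writing $a_i = |A_i^{(1)}|$ and $b_i = |A_i^{(2)}|$, the disjointness $A_i^{(1)} \cap A_i^{(2)} = \emptyset$ means the restriction of $\pi$ to $A_i^{(1)} \cup A_i^{(2)}$ is a uniformly random linear order on $a_i + b_i$ elements; $E_i$ occurs iff the $a_i$ slots used by $A_i^{(1)}$ form the initial segment, so
$$\Pr[E_i] = \binom{a_i+b_i}{a_i}^{-1}.$$

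The heart of the argument, and the step I expect to require the most care, is to show that the events $E_1, \dots, E_m$ are pairwise disjoint. Suppose for contradiction that $E_i$ and $E_j$ both hold for some $i \ne j$. Because $\mathcal{P}$ is a (two-sided) Bollob\'as system, both $A_i^{(1)} \cap A_j^{(2)}$ and $A_j^{(1)} \cap A_i^{(2)}$ are nonempty, so pick $x \in A_i^{(1)} \cap A_j^{(2)}$ and $y \in A_j^{(1)} \cap A_i^{(2)}$. Then $E_i$ forces $\pi(x) < \pi(y)$ (since $x \in A_i^{(1)}$ and $y \in A_i^{(2)}$), while $E_j$ forces $\pi(y) < \pi(x)$ (since $y \in A_j^{(1)}$ and $x \in A_j^{(2)}$), a contradiction.

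Combining disjointness with the probability computation yields
$$\sum_{i=1}^m \binom{a_i+b_i}{a_i}^{-1} = \sum_{i=1}^m \Pr[E_i] = \Pr\!\left[\,\bigcup_{i=1}^m E_i\,\right] \le 1,$$
which is the desired inequality. The only delicate point is the pairwise disjointness step: it is precisely where the symmetric intersection hypothesis (as opposed to the one-sided skew condition of Definition~\ref{def-pair}) is essential, and any attempt to adapt this argument to the skew setting will have to introduce a second layer of randomness coming from a uniform ordering of the index set $[m]$.
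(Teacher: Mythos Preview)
Your proof is correct and is essentially the same approach the paper uses: the paper's Lemma~\ref{main-lemma} with $e=1$ is exactly the double-counting version of your random-permutation argument, and your disjointness step (picking $x\in A_i^{(1)}\cap A_j^{(2)}$, $y\in A_j^{(1)}\cap A_i^{(2)}$ and deriving $\pi(x)<\pi(y)<\pi(x)$) is precisely the paper's proof that $N_B(2,s)=N_{strong}(2,s)=1$ (Proposition~\ref{strong-n}). The only cosmetic difference is that you phrase it probabilistically while the paper sums over all permutations and divides by $s!$.
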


Theorem \ref{B-pair} implies immediately the following result.

\begin{Theorem}\label{B-number-pair}
Let $\mathcal{P}=\{(A_i^{(1)},A_i^{(2)})\}_{1\leq i\leq m}$ be a Bollob\'{a}s system with $|A_i^{(1)}|\leq a_1$ and $|A_i^{(2)}|\leq a_2$ for every $i\in[m]$. Then
$$m\leq\binom{a_1+a_2}{a_1}.$$
\end{Theorem}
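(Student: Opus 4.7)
The plan is to derive Theorem \ref{B-number-pair} directly from Theorem \ref{B-pair} by bounding each summand from below using a monotonicity property of binomial coefficients. Specifically, I will show that whenever $b_1\le a_1$ and $b_2\le a_2$, one has
\[
\binom{b_1+b_2}{b_1}\le\binom{a_1+a_2}{a_1},
\]
so that each term $\binom{|A_i^{(1)}|+|A_i^{(2)}|}{|A_i^{(1)}|}^{-1}$ in Theorem \ref{B-pair} is at least $\binom{a_1+a_2}{a_1}^{-1}$. Summing over $i\in[m]$ then gives $m\binom{a_1+a_2}{a_1}^{-1}\le 1$, which is exactly the desired bound.

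For the monotonicity claim, my preferred route is Vandermonde's identity: writing
\[
\binom{a_1+a_2}{a_1}=\sum_{k}\binom{b_1+b_2}{k}\binom{(a_1-b_1)+(a_2-b_2)}{a_1-k},
\]
all summands are nonnegative, and the $k=b_1$ term equals $\binom{b_1+b_2}{b_1}\binom{(a_1-b_1)+(a_2-b_2)}{a_1-b_1}$, whose second factor is a nonnegative integer that is at least $1$ because $0\le a_1-b_1\le(a_1-b_1)+(a_2-b_2)$. Alternatively one can iterate the two one-step inequalities $\binom{b+1}{b_1}\ge\binom{b}{b_1}$ (with $b=b_1+b_2$, incrementing $b_2$) and $\binom{b+1}{b_1+1}\ge\binom{b}{b_1}$ (incrementing $b_1$), which follow from Pascal's identity.

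There is no real obstacle here: the statement is described as an immediate consequence of Theorem \ref{B-pair}, and the only content beyond that theorem is the binomial monotonicity above. I would keep the writeup to essentially two lines, namely the monotonicity inequality followed by the summation argument.
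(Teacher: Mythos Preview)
Your proposal is correct and matches the paper's approach: the paper states that Theorem~\ref{B-number-pair} follows immediately from Theorem~\ref{B-pair}, and the only content to fill in is precisely the binomial monotonicity $\binom{b_1+b_2}{b_1}\le\binom{a_1+a_2}{a_1}$ for $b_1\le a_1$, $b_2\le a_2$ that you supply. Your Vandermonde argument (or the Pascal iteration) is a clean way to justify this step.
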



Theorem \ref{B-number-pair} was independently rediscovered by Katona \cite{katona} and Jaeger and Payan \cite{jp}, thereby bringing Bollob\'{a}s systems back to the forefront of extremal set theory. A striking feature of this theorem is that the upper bound depends only on $a_1$ and $a_2$, and not on the size of the ground set. Lov\'{a}sz \cite{l} used exterior algebra methods to give another proof of Theorem \ref{B-number-pair}. This method gives an elegant argument that naturally extends to subspaces of a finite-dimensional vector space, and a set-pair system version follows immediately.

Surprisingly, the same upper bound as stated in Theorem \ref{B-number-pair} remains valid even for skew Bollob\'{a}s systems.

\begin{Theorem}{\rm \cite{frankl,kalai}}\label{skew-number-pair}
Let $\mathcal{P}=\{(A_i^{(1)},A_i^{(2)})\}_{1\leq i\leq m}$ be a skew Bollob\'{a}s system with $|A_i^{(1)}|\leq a_1$ and $|A_i^{(2)}|\leq a_2$ for every $i\in[m]$. Then
$$m\leq\binom{a_1+a_2}{a_1}.$$
\end{Theorem}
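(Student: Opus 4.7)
The plan is to generalize Lov\'{a}sz's exterior-algebra proof of Theorem \ref{B-number-pair} by means of a top-down linear-independence argument due to Frankl and Kalai, which is tailored to the ordered nature of the skew hypothesis.

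First I would reduce to the case where $|A_i^{(1)}|=a_1$ and $|A_i^{(2)}|=a_2$ for every $i$. Enlarging the ground set with fresh dummy elements, one can pad each $A_i^{(1)}$ and $A_i^{(2)}$ up to sizes $a_1$ and $a_2$ using dummies that are (i) distinct across the two sides of every single pair and (ii) used in no other pair. Choice (i) preserves $A_i^{(1)}\cap A_i^{(2)}=\emptyset$, and choice (ii) leaves the original intersections $A_i^{(1)}\cap A_j^{(2)}$ for $i<j$ untouched, so the skew Bollob\'as property is maintained and $m$ is unchanged.

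Next I would fix $V=\mathbb{R}^{a_1+a_2}$ and, for each point $x$ of the extended ground set, choose a vector $u_x\in V$ in general position, i.e.\ so that any $a_1+a_2$ of them are linearly independent (for instance, take points on the moment curve). For each $i\in[m]$ set
\[
\omega_i=\bigwedge_{x\in A_i^{(1)}}u_x\in\Lambda^{a_1}(V),\qquad \eta_i=\bigwedge_{y\in A_i^{(2)}}u_y\in\Lambda^{a_2}(V).
\]
Two identities are then immediate. First, $\omega_i\wedge\eta_i\ne 0$ in the one-dimensional space $\Lambda^{a_1+a_2}(V)$, because $A_i^{(1)}\cup A_i^{(2)}$ consists of $a_1+a_2$ distinct points whose associated vectors are linearly independent by general position. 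Second, $\omega_i\wedge\eta_j=0$ for $i<j$, because the skew hypothesis supplies an element in $A_i^{(1)}\cap A_j^{(2)}$ whose vector appears as a repeated factor of the wedge.

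To conclude, I would show $\omega_1,\dots,\omega_m$ are linearly independent in $\Lambda^{a_1}(V)$. Assuming $\sum_i c_i\omega_i=0$, let $k$ be the largest index with $c_k\ne 0$ and wedge the relation with $\eta_k$: the second identity annihilates every term with $i<k$, leaving $c_k\,(\omega_k\wedge\eta_k)=0$, which forces $c_k=0$ by the first identity, a contradiction. Hence $m\le\dim\Lambda^{a_1}(V)=\binom{a_1+a_2}{a_1}$. The main delicate point I anticipate is the padding in Step 1, where one must introduce dummies carefully so as neither to destroy the within-pair disjointness nor to manufacture spurious skew intersections; the algebraic steps are then essentially automatic, and it is precisely the \emph{largest-index} manoeuvre that lets the one-sided skew hypothesis do the work that two-sided disjointness does in the classical Bollob\'as case.
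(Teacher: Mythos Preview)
Your proof is correct; this is essentially the exterior-algebra argument of Lov\'asz, adapted to the skew setting by the largest-index trick exactly as in Kalai's paper (which the present paper cites for this result). Note, however, that the paper under review does not give its own proof of Theorem~\ref{skew-number-pair}: it is quoted from \cite{frankl,kalai} as background. The paper's in-house machinery---the permutation double-counting of Lemma~\ref{main-lemma} together with Proposition~\ref{skew-n}---is designed to produce the weighted inequalities of Theorems~\ref{skew-weight-pair}, \ref{skew-d-partition} and~\ref{X-skew-d-partition}, whose right-hand sides depend on the size of the ground set; that method does not directly recover the $n$-independent uniform bound $\binom{a_1+a_2}{a_1}$. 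So there is nothing to compare against here beyond observing that your algebraic route is the standard one, while the paper's combinatorial technique is aimed at a different family of inequalities.
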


However, the inequality in Theorem \ref{B-pair} cannot be applied to skew Bollob\'{a}s systems. Very recently, Heged\"{u}s and Frankl \cite{hf} established the following result.

\begin{Theorem}{\rm \cite[Theorem 1.4]{hf}}\label{skew-weight-pair}
Let $\mathcal{P}=\{(A_i^{(1)},A_i^{(2)})\}_{1\leq i\leq m}$ be a skew Bollob\'{a}s system on $[n]$. Then
$$\sum_{i=1}^m\binom{|A_i^{(1)}|+|A_i^{(2)}|}{|A_i^{(1)}|}^{-1}\leq n+1,$$
and the upper bound is tight.
\end{Theorem}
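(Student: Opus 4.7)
The plan is to use Katona's random-permutation method, adapted to the skew setting. Let $\pi$ be a uniformly random permutation of $[n]$ and, for each $i \in [m]$, let $E_i$ denote the event that every element of $A_i^{(1)}$ precedes every element of $A_i^{(2)}$ in $\pi$. Since $A_i^{(1)}$ and $A_i^{(2)}$ are disjoint, $\Pr(E_i) = \binom{|A_i^{(1)}|+|A_i^{(2)}|}{|A_i^{(1)}|}^{-1}$, so by linearity of expectation,
$$\sum_{i=1}^{m}\binom{|A_i^{(1)}|+|A_i^{(2)}|}{|A_i^{(1)}|}^{-1} \;=\; \Ex\bigl[\,\bigl|\{i : E_i \text{ holds}\}\bigr|\,\bigr].$$
It therefore suffices to establish the deterministic bound $|\{i : E_i \text{ holds}\}| \leq n+1$ for every $\pi$.

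Fix $\pi$ and, for each index $i$ with $E_i$ holding, set $f_i = \max\{\pi^{-1}(x) : x \in A_i^{(1)}\}$ when $A_i^{(1)}\neq\emptyset$, and $f_i = 0$ otherwise. A quick check shows that in any skew Bollob\'{a}s system $A_i^{(1)}=\emptyset$ can only occur when $i=m$ (else $A_i^{(1)}\cap A_m^{(2)}=\emptyset$ violates the hypothesis), so $f_i\in\{1,\dots,n\}$ for $i<m$ and $f_m\in\{0,1,\dots,n\}$. The key claim is that $i\mapsto f_i$ is strictly decreasing on the set of indices where $E_i$ holds: for $i<j$ with both events holding, the skew hypothesis supplies $x\in A_i^{(1)}\cap A_j^{(2)}$, and then $\pi^{-1}(x)\leq f_i$ because $x\in A_i^{(1)}$, whereas $E_j$ combined with $x\in A_j^{(2)}$ gives $\pi^{-1}(x)>f_j$ (valid also when $A_j^{(1)}=\emptyset$, since then $f_j=0<\pi^{-1}(x)$). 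Hence the $f_i$ are distinct elements of $\{0,1,\dots,n\}$, so at most $n+1$ events $E_i$ can hold.

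For tightness, take $\mathcal{P} = \{(A,\,[n]\setminus A) : A \subseteq [n]\}$ listed in any order refining decreasing cardinality of the first coordinate, so that $i<j$ forces $A_i^{(1)}\not\subseteq A_j^{(1)}$. Then $A_i^{(1)} \cap A_j^{(2)} = A_i^{(1)}\setminus A_j^{(1)} \neq \emptyset$ whenever $i<j$, so $\mathcal{P}$ is a skew Bollob\'{a}s system, and its weight sum is
$$\sum_{A\subseteq[n]}\binom{n}{|A|}^{-1}=\sum_{k=0}^{n}\binom{n}{k}\binom{n}{k}^{-1}=n+1.$$

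The delicate point I anticipate is the bookkeeping at the endpoints $i=1$ and $i=m$, where one of $A_i^{(1)},A_i^{(2)}$ may be empty and $f_i$ must be defined by a separate convention. Recognizing that this single extra available value $f_m=0$ is precisely what widens the strict-chain range from $\{1,\dots,n\}$ to $\{0,1,\dots,n\}$ is the whole reason the sharp constant is $n+1$ rather than $n$, and it is what makes the random-permutation argument match the extremal construction exactly.
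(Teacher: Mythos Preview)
Your proof is correct and follows essentially the same permutation double-counting strategy that the paper uses (Lemma~\ref{main-lemma} and Proposition~\ref{skew-n} specialized to $d=2$, as in the Remark deriving Theorem~\ref{skew-d-partition}): both average over permutations and bound, for each fixed permutation, the number of indices $i$ with $A_i^{(1)}$ preceding $A_i^{(2)}$. Your strictly-decreasing $f_i$ argument is a direct realization of the paper's bound $N_{skew}(2,s)=s+1$, and your tightness construction (all pairs $(A,[n]\setminus A)$ ordered by decreasing $|A|$) coincides with the paper's extremal example for $d=2$.
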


Skew Bollob\'{a}s systems find their applications in various fields, including topology (cf. \cite{bj}), extremal graph theory (cf. \cite{ap}) and matrix complexity theory (cf. \cite{t}). For further reading on  Bollob\'{a}s-type theorems, the reader is referred to \cite{f,kkk,nv,sw,talbot,ykxzg}.

Let $d\geq2$ and $A^{(1)},\ldots,A^{(d)}\subseteq [n]$. We call $(A^{(1)},\dots,A^{(d)})$ a \emph{$d$-partition} of $[n]$ if $A^{(1)}\cup\dots\cup A^{(d)}\subseteq [n]$, and $A^{(p)}\cap A^{(q)}=\emptyset$ for $1\le p<q\le d$. Furthermore, the $d$-partition of $[n]$ is said to be \emph{full} if $A^{(1)}\cup\dots\cup A^{(d)}=[n]$. For non-negative integers $a_1,\ldots,a_d$ satisfying $a_1+\cdots+a_d\le n$, the number of all $d$-partitions $A^{(1)}\cup\cdots\cup A^{(d)}\subseteq[n]$ with $|A^{(r)}|=a_r$, $r\in[d]$, is
$$\binom{n}{a_1,\ldots,a_d}:=\frac{n!}{a_1!\cdots a_d!(n-a_1-\cdots-a_d)!}.$$

\begin{Definition}\label{def-d}
Let $d\geq2$. Let $\mathcal{D}=\{(A_i^{(1)},\dots,A_i^{(d)})\}_{1\le i\le m}$ be a family of $d$-partitions of $[n]$. Then

\begin{itemize}
\item[$(1)$] $\mathcal{D}$ is called a \emph{weak Bollob\'{a}s system} if for any $1\le i<j\le m$, there exist $1\le p< q\le d$ such that $A_i^{(p)}\cap A_j^{(q)}\ne\emptyset$ or $A_j^{(p)}\cap A_i^{(q)}\ne\emptyset$;

\item[$(2)$] $\mathcal{D}$ is called a \emph{skew Bollob\'{a}s system} if for any $1\le i<j\le m$, there exist $1\le p< q\le d$ such that $A_i^{(p)}\cap A_j^{(q)}\ne\emptyset$;

\item[$(3)$] $\mathcal{D}$ is called a \emph{Bollob\'{a}s system} if for any $1\le i, j\le m$ and $i\ne j$, there exist $1\le p< q\le d$ such that $A_i^{(p)}\cap A_j^{(q)}\ne\emptyset$;

\item[$(4)$] $\mathcal{D}$ is called a \emph{strong Bollob\'{a}s system} if for any $1\leq i<j\leq m$, there exist $1\leq u_1<u_2\leq d$ and $1\leq v_1<v_2\leq d$ such that $A_i^{(u_1)}\cap A_j^{(v_2)}\neq \emptyset$ and $A_i^{(u_2)}\cap A_j^{(v_1)}\neq \emptyset$;

\item[$(5)$] $\mathcal{{D}}$ is called a \emph{symmetric Bollob\'{a}s system} if for any $1\le i<j\le m$, there exist $1\le p< q\le d$ such that $A_i^{(p)}\cap A_j^{(q)}\ne\emptyset$ and $A_j^{(p)}\cap A_i^{(q)}\ne\emptyset$.
\end{itemize}	
\end{Definition}

For any $d\geq2$, (5)$\Rightarrow$(4)$\Rightarrow$(3)$\Rightarrow$(2)$\Rightarrow$(1). Note that (3) $\not\Rightarrow$(4). For example, $\mathcal{D}=\{(A_i^{(1)},A_i^{(2)},$ $A_i^{(3)})\}_{1\le i\le 2}$ is a Bollob\'{a}s system of $3$-partitions of $[2]$, where
\begin{align*}
(A_1^{(1)},A_1^{(2)},A_1^{(3)})=&(\{1\},\emptyset,\{2\});\\
(A_2^{(1)},A_2^{(2)},A_2^{(3)})=&(\emptyset,\{1,2\},\emptyset).
\end{align*}
However, $\mathcal D$ is not a strong Bollob\'{a}s system.

Particularly when $d=2$, (3)$\Leftrightarrow$(4)$\Leftrightarrow$(5)$\Leftrightarrow$ Bollob\'{a}s systems in Definition \ref{def-pair}, and (2)$\Leftrightarrow$ skew Bollob\'{a}s systems in Definition \ref{def-pair}.

\subsection{Skew Bollob\'{a}s systems}

Heged\"{u}s and Frankl \cite{hf} proposed the concept of skew Bollob\'{a}s systems of $d$-partitions of $[n]$ and established the following result, which is a generalization of Theorem \ref{skew-weight-pair}.

\begin{Theorem} {\rm \cite[Example 2 and Theorem 4.2]{hf}} \label{skew-d-partition}
Let $\mathcal{D}=\{(A_i^{(1)},\ldots,A_i^{(d)})\} _{1\leq i\leq m}$ be a skew Bollob\'{a}s system of $d$-partitions of $[n]$. Then
$$\sum_{i=1}^{m} \binom{\sum_{r=1}^{d}|A_i^{(r)}|}{|A_{i}^{(1)}|,\ldots,|A_{i}^{(d)}|}^{-1}\le \binom{n+d-1}{d-1},$$
and the upper bound is tight.
\end{Theorem}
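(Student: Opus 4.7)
The plan is to adapt the probabilistic / random-permutation argument that underlies Theorem~\ref{B-pair} and the pair case of Theorem~\ref{skew-weight-pair} to general $d$-partitions. Let $\pi$ be a uniformly random permutation of $[n]$, and for each $i\in[m]$ let $\E_i$ be the event that, in $\pi$, every element of $A_i^{(r)}$ precedes every element of $A_i^{(s)}$ whenever $r<s$. A direct count of the relative orderings of the elements of $A_i^{(1)}\cup\cdots\cup A_i^{(d)}$ gives
$$\Pr(\E_i)=\binom{\sum_{r=1}^{d}|A_i^{(r)}|}{|A_i^{(1)}|,\dots,|A_i^{(d)}|}^{-1},$$
so the target sum equals $\Ex\bigl[\#\{i:\E_i\text{ holds}\}\bigr]$. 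It therefore suffices to show that for every fixed $\pi$ at most $\binom{n+d-1}{d-1}$ indices $i$ can satisfy $\E_i$.

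To prove this deterministic bound, I attach to each $i$ the \emph{profile} $\vec p_i=(p_i^{(1)},\dots,p_i^{(d-1)})\in\{0,1,\dots,n\}^{d-1}$, where $p_i^{(r)}$ is the $\pi$-position of the last element of $A_i^{(1)}\cup\cdots\cup A_i^{(r)}$ (set to $0$ if this union is empty). Since the unions are nested, $\vec p_i$ is weakly increasing, and a standard stars-and-bars count shows there are exactly $\binom{n+d-1}{d-1}$ weakly increasing tuples in $\{0,\dots,n\}^{d-1}$. So everything reduces to showing that the profiles of the indices $i$ with $\E_i$ holding are pairwise distinct.

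This distinctness step is the crux. Suppose $i<j$, both $\E_i$ and $\E_j$ hold, and $\vec p_i=\vec p_j$. The skew Bollob\'as condition yields $1\le p<q\le d$ and $x\in A_i^{(p)}\cap A_j^{(q)}$. Then $\E_i$ together with $x\in A_i^{(p)}\subseteq A_i^{(1)}\cup\cdots\cup A_i^{(p)}$ forces $\mathrm{pos}_\pi(x)\le p_i^{(p)}$, while $\E_j$ together with $x\in A_j^{(q)}$ forces $\mathrm{pos}_\pi(x)>p_j^{(q-1)}$. Combining these inequalities with $p\le q-1$ and the weak monotonicity of $\vec p_j$ produces
$$\mathrm{pos}_\pi(x)\le p_i^{(p)}=p_j^{(p)}\le p_j^{(q-1)}<\mathrm{pos}_\pi(x),$$
a contradiction. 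Taking expectation in $\pi$ then yields $\sum_i\Pr(\E_i)\le\binom{n+d-1}{d-1}$.

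The main obstacle is really just engineering the profile correctly: one wants a statistic whose range has cardinality exactly $\binom{n+d-1}{d-1}$ and that converts the skew witness $x\in A_i^{(p)}\cap A_j^{(q)}$ with $p<q$ into a single-coordinate squeeze against the assumption $\vec p_i=\vec p_j$. Tightness, which is only asserted in the statement, requires a separate construction exhibiting $\binom{n+d-1}{d-1}$ explicit $d$-partitions whose reciprocal multinomial weights sum to the bound; for this I would consult the construction in \cite{hf}.
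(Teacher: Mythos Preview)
Your proof is correct and takes essentially the same approach as the paper. The random-permutation setup is exactly Lemma~\ref{main-lemma} with $e=1$, and your profile map together with its injectivity argument is equivalent to (indeed, a more explicit version of) the paper's Proposition~\ref{skew-n}, which bounds $|I_\sigma|$ by extending each ordered $d$-partition to a full one and counting the compositions of $s$; your cumulative-position profile $(p_i^{(1)},\dots,p_i^{(d-1)})$ is precisely the sequence of partial block-sums of that full extension.
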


Alon \cite{alon} generalized Theorem \ref{skew-number-pair}. He examined the maximum size of a skew Bollob\'{a}s set-pair system in which every set has a fixed size in each part of the ground set.

\begin{Theorem}{\rm \cite{alon}}\label{Theorem-Alon}
Suppose that $[n]$ is the disjoint union of some sets $X_1,\ldots,X_e$. Let $\mathcal{P}=\{(A_i^{(1)},A_i^{(2)})\}_{1\leq i\leq m}$ be a skew Bollob\'{a}s system on $[n]$ with $|A_i^{(1)}\cap X_k|=a_{1,k}$ and $|A_i^{(2)}\cap X_k|=a_{2,k}$ for every $i\in[m]$ and $k\in[e]$. Then
$$m\leq\prod_{k=1}^{e}\binom{a_{1,k}+a_{2,k}}{a_{1,k}}.$$
\end{Theorem}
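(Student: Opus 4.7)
The plan is to adapt Lov\'{a}sz's exterior-algebra proof of Theorem \ref{skew-number-pair} to the partitioned setting, by carrying it out separately on each piece $X_k$ and then assembling the results with a tensor product; the case $e=1$ recovers Theorem \ref{skew-number-pair} verbatim.

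For each $k \in [e]$, I would pick vectors $\{v_j^{(k)} : j \in X_k\}$ in general position inside $V_k := \mathbb{R}^{a_{1,k}+a_{2,k}}$, meaning that any $a_{1,k}+a_{2,k}$ of them are linearly independent (for instance, take columns of a Vandermonde matrix). Fix an arbitrary total order on each $X_k$. For every $i \in [m]$, set
$$u_i := \bigotimes_{k=1}^e \bigwedge_{j \in A_i^{(1)} \cap X_k} v_j^{(k)} \;\in\; U := \bigotimes_{k=1}^e \Lambda^{a_{1,k}}(V_k),$$
and
$$w_i := \bigotimes_{k=1}^e \bigwedge_{j \in A_i^{(2)} \cap X_k} v_j^{(k)} \;\in\; W := \bigotimes_{k=1}^e \Lambda^{a_{2,k}}(V_k),$$
with the wedge factors ordered according to the chosen total orders. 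Observe that $\dim U = \prod_{k=1}^e \binom{a_{1,k}+a_{2,k}}{a_{1,k}}$.

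The key step is the canonical pairing $\langle\cdot,\cdot\rangle \colon U \otimes W \to \mathbb{R}$, obtained by applying the top-form evaluation $\Lambda^{a_{1,k}}(V_k) \otimes \Lambda^{a_{2,k}}(V_k) \to \Lambda^{a_{1,k}+a_{2,k}}(V_k) \cong \mathbb{R}$ on each factor and multiplying across $k$. By construction, $\langle u_i, w_j \rangle$ is a product of $e$ scalars, whose $k$-th factor is, up to sign, the determinant of the vectors $\{v_l^{(k)} : l \in (A_i^{(1)} \cap X_k) \cup (A_j^{(2)} \cap X_k)\}$ inside $V_k$. Since there are exactly $\dim V_k = a_{1,k}+a_{2,k}$ such vectors, the general-position hypothesis makes this determinant nonzero iff the two index sets are disjoint. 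Multiplying over $k$ gives the crucial equivalence
$$\langle u_i, w_j \rangle \ne 0 \iff A_i^{(1)} \cap A_j^{(2)} = \emptyset.$$

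The skew Bollob\'{a}s hypothesis then yields $\langle u_i, w_j \rangle = 0$ whenever $i<j$, while the set-pair condition $A_i^{(1)} \cap A_i^{(2)} = \emptyset$ gives $\langle u_i, w_i \rangle \ne 0$ for every $i$. Hence the $m \times m$ matrix $M := (\langle u_i, w_j \rangle)$ is lower triangular with nonzero diagonal and has rank $m$. Because $M$ is the Gram-type matrix of the $u_i$'s against the $w_j$'s under a bilinear pairing on $U \times W$, one also has $\mathrm{rank}(M) \le \dim U$, whence $m \le \prod_{k=1}^e \binom{a_{1,k}+a_{2,k}}{a_{1,k}}$. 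I expect the main obstacle to be identifying the correct ambient space — a factor of dimension exactly $a_{1,k}+a_{2,k}$ per part, tensored together — since a smaller dimension would spoil the nonvanishing on the diagonal while a larger one would inflate the bound; once this choice is made, the rest is routine multilinear-algebra bookkeeping, exploiting the fact that the cardinality assumption on $A_i^{(1)} \cap X_k$ and $A_i^{(2)} \cap X_k$ matches $\dim V_k$ for every pair $(i,j)$ simultaneously.
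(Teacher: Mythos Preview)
Your argument is correct and is, in essence, Alon's original proof (the multilinear/exterior-algebra method, tensored across the parts $X_k$). Note, however, that the paper does not supply its own proof of Theorem~\ref{Theorem-Alon}: the result is quoted from~\cite{alon} as background, so there is no ``paper's proof'' to compare against in the strict sense.

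It is nonetheless instructive to contrast your approach with the paper's machinery. The paper's tool throughout is the permutation double-count of Lemma~\ref{main-lemma}. Specialising the paper's Theorem~\ref{X-skew-d-partition} to $d=2$ and to the uniform hypotheses $|A_i^{(r)}\cap X_k|=a_{r,k}$ yields only
\[
m\cdot\prod_{k=1}^{e}\binom{a_{1,k}+a_{2,k}}{a_{1,k}}^{-1}\le \prod_{k=1}^{e}(s_k+1),
\]
i.e.\ $m\le \prod_{k}(s_k+1)\binom{a_{1,k}+a_{2,k}}{a_{1,k}}$, which is strictly weaker than Alon's bound by the extraneous factor $\prod_k(s_k+1)$. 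So the exterior-algebra route you chose is not merely a stylistic alternative: the permutation method, as deployed in this paper, trades the sharp ground-set-independent uniform bound for a nonuniform weighted inequality, and does not specialise back to Theorem~\ref{Theorem-Alon}. What your method buys is precisely that independence from $s_k$; what the paper's method buys is a bound valid without any uniformity assumption on the part-wise sizes.
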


Yue \cite[Theorem 1.7]{yue} provided a nonuniform version of Theorem \ref{Theorem-Alon}. This paper generalizes Yue's result from $d=2$ to any $d$.

\begin{Theorem}\label{X-skew-d-partition}
Suppose that $[n]$ is the disjoint union of some sets $X_1,\ldots,X_e$. Let $\mathcal{D}=\{(A_i^{(1)},\dots, A_i^{(d)})\}_{1\le i\le m}$ be a skew Bollob\'{a}s system of $d$-partitions of $[n]$. Let $S=\bigcup_{i=1}^m\bigcup_{r=1}^d A_i^{(r)}$ and $s_k=|S\cap X_k|$ for $k\in[e]$. Then
$$\sum_{i=1}^{m}\left(\prod_{k=1}^{e}\binom{\sum_{r=1}^{d}|A_i^{(r)}\cap X_k|}{|A_i^{(1)}\cap X_k|,\ldots,|A_{i}^{(d)}\cap X_{k}|}\right)^{-1}\le \prod_{k=1}^{e}\binom{s_k+d-1}{d-1},$$
and the upper bound is tight.
\end{Theorem}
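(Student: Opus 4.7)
The plan is to adapt the probabilistic argument behind Theorem~\ref{skew-d-partition} to a product structure, sampling one ordered $d$-partition per part $X_k$ independently. First, since elements outside $S$ appear in no $A_i^{(r)}$, we may assume $[n]=S$, so that $s_k=|X_k|$. For each $k\in[e]$ we independently sample a uniformly random permutation $\sigma_k$ of $X_k$ together with an independent, uniformly random weakly increasing divider sequence $0\le d_{k,1}\le\cdots\le d_{k,d-1}\le s_k$ (there are exactly $\binom{s_k+d-1}{d-1}$ such sequences). Setting $d_{k,0}=0$ and $d_{k,d}=s_k$, this produces a random ordered $d$-partition $(C_k^{(1)},\ldots,C_k^{(d)})$ of $X_k$, where $C_k^{(r)}$ collects the elements $\sigma_k(j)$ with $d_{k,r-1}<j\le d_{k,r}$.

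For each $i\in[m]$, define $E_i$ to be the event that $A_i^{(r)}\cap X_k\subseteq C_k^{(r)}$ for every $k\in[e]$ and $r\in[d]$. The key observation is that the events $\{E_i\}_{i=1}^m$ are pairwise disjoint. Indeed, if $E_i\cap E_j\ne\emptyset$ for some $i<j$, the skew Bollob\'as condition supplies indices $p<q$ and an element $x\in A_i^{(p)}\cap A_j^{(q)}$; this $x$ lies in a unique part $X_k$, so $E_i$ forces $x\in C_k^{(p)}$ while $E_j$ forces $x\in C_k^{(q)}$, contradicting the disjointness of the blocks of the ordered $d$-partition of $X_k$.

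It remains to lower bound $\Pr[E_i]$. Writing $E_{i,k}=\{A_i^{(r)}\cap X_k\subseteq C_k^{(r)}\text{ for all }r\in[d]\}$ so that $E_i=\bigcap_{k=1}^e E_{i,k}$, independence across $k$ gives $\Pr[E_i]=\prod_{k=1}^e\Pr[E_{i,k}]$. Putting $a_{i,r,k}=|A_i^{(r)}\cap X_k|$ and $t_{i,k}=\sum_{r=1}^d a_{i,r,k}$, a direct count (condition on the block sizes, choose which positions receive the labelled elements of each $A_i^{(r)}\cap X_k$, then order the remaining slots) combined with the convolution identity
$$\sum_{\substack{b_1',\ldots,b_d'\ge 0\\ b_1'+\cdots+b_d'=s_k-t_{i,k}}}\prod_{r=1}^d\binom{a_{i,r,k}+b_r'}{a_{i,r,k}}=\binom{s_k+d-1}{t_{i,k}+d-1}$$
yields the exact formula
$$\Pr[E_{i,k}]=\binom{t_{i,k}+d-1}{d-1}^{-1}\binom{t_{i,k}}{a_{i,1,k},\ldots,a_{i,d,k}}^{-1}.$$
Since $t_{i,k}\le s_k$ implies $\binom{t_{i,k}+d-1}{d-1}\le\binom{s_k+d-1}{d-1}$, multiplying over $k$ and invoking $\sum_{i=1}^m\Pr[E_i]\le 1$ (from the disjointness above) delivers the desired inequality. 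Tightness will follow by fixing a linear order on each $X_k$ and, for every $e$-tuple of $d$-compositions of $(s_1,\ldots,s_e)$, taking the corresponding consecutive-block full $d$-partition; the lexicographic order on composition tuples supplies the required skew witnesses.

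The main obstacle is identifying the right probabilistic model. One must couple a uniform permutation with an independent divider sequence within each $X_k$: the dividers are precisely what produce the extra factor $\binom{s_k+d-1}{d-1}$ per part, and without them the inequality would collapse to a bound of $1$ on the right-hand side. Conversely, the skew condition remains effective in the multi-part setting only because any given skew witness $x$ lives in a single $X_k$, so the contradiction needed for disjointness reduces to a single part. Once these two points are in place, the probability computation is routine and the proof is assembled cleanly.
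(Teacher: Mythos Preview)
Your proof of the inequality is correct and is essentially the paper's permutation argument in probabilistic dress. The paper applies Lemma~\ref{main-lemma} (double-count over $\sigma\in G(S)$) and then invokes the bound $|I_\sigma|\le\prod_k\binom{s_k+d-1}{d-1}$ coming from Proposition~\ref{skew-n}; by folding the dividers into your sample space you absorb that second step, so that disjointness of the $E_i$ alone finishes the job. The exact formula $\Pr[E_{i,k}]=\binom{t_{i,k}+d-1}{d-1}^{-1}\binom{t_{i,k}}{a_{i,1,k},\ldots,a_{i,d,k}}^{-1}$ is right, and the convolution identity you cite is the standard $\prod_r(1-x)^{-(a_{i,r,k}+1)}=(1-x)^{-(t_{i,k}+d)}$ coefficient extraction.

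Your tightness construction, however, does not achieve equality. Taking \emph{one} consecutive-block partition per $e$-tuple of compositions gives $m=\prod_k\binom{s_k+d-1}{d-1}$ members, but the $i$-th summand is $\prod_k\binom{s_k}{c_{k,1},\ldots,c_{k,d}}^{-1}\le 1$, with strict inequality whenever any composition is nondegenerate; already for $e=1$, $d=2$, $s_1=2$ your system is $\{(\emptyset,\{1,2\}),(\{1\},\{2\}),(\{1,2\},\emptyset)\}$ and the sum is $1+\tfrac12+1=\tfrac52<3=\binom{3}{1}$. The construction that works (and that the paper uses) is to take \emph{all} $d^n$ full $d$-partitions of $[n]$, ordered so that the size vector is weakly decreasing in lex. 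Then for each fixed type tuple there are exactly $\prod_k\binom{s_k}{c_{k,1},\ldots,c_{k,d}}$ members of that type, so every type contributes precisely $1$ to the sum, and summing over the $\prod_k\binom{s_k+d-1}{d-1}$ types gives equality.
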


Taking $e=1$ in Theorem \ref{X-skew-d-partition}, we get Theorem \ref{skew-d-partition}.

\subsection{Bollob\'{a}s systems}

Heged\"{u}s and Frankl \cite{hf} raised the following conjecture on Bollob\'{a}s systems of $d$-partitions of $[n]$.

\begin{Conjecture}{\rm \cite[Conjecture 1]{hf}}\label{conj}
Suppose that $\mathcal{D}=\{(A_i^{(1)},\ldots,A_i^{(d)})\}_{1\leq i\leq m}$ is a Bollob\'{a}s system of $d$-partitions of $[n]$. Then
$$\sum_{i=1}^m\binom{\sum_{r=1}^{d}|A_i^{(r)}|}{|A_i^{(1)}|,\ldots,|A_i^{(d)}|}^{-1}\leq 1.$$
\end{Conjecture}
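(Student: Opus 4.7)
The plan is to \emph{disprove} Conjecture \ref{conj} rather than prove it. The motivation for going straight to a counterexample is the following. The inclusion chain (5)$\Rightarrow$(4)$\Rightarrow$(3)$\Rightarrow$(2)$\Rightarrow$(1) from Definition \ref{def-d} together with Theorem \ref{skew-d-partition} already tells us that the skew bound grows like $\binom{n+d-1}{d-1}$, whereas the conjecture asks for a dimension-free bound of $1$ matching the classical $d=2$ inequality of Theorem \ref{B-pair}. For $d\geq 3$, however, the condition in Definition \ref{def-d}(3) only requires, for each ordered pair $(i,j)$, a single pair of indices $p<q$ with $A_i^{(p)}\cap A_j^{(q)}\neq\emptyset$; it does not enforce the \emph{crossed} intersections demanded by the strong Bollob\'{a}s condition. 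This asymmetry is exactly what should break the classical-style argument, so one should hunt for a counterexample inside the documented gap between (3) and (4).

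A natural candidate is the very example given in the excerpt to witness (3)$\not\Rightarrow$(4):
\[
(A_1^{(1)},A_1^{(2)},A_1^{(3)})=(\{1\},\emptyset,\{2\}),\qquad (A_2^{(1)},A_2^{(2)},A_2^{(3)})=(\emptyset,\{1,2\},\emptyset).
\]
This is already verified in the excerpt to be a Bollob\'{a}s system of $3$-partitions of $[2]$. A direct computation gives $\binom{2}{1,0,1}=2$ and $\binom{2}{0,2,0}=1$, so
\[
\sum_{i=1}^{2}\binom{\sum_{r=1}^{3}|A_i^{(r)}|}{|A_i^{(1)}|,|A_i^{(2)}|,|A_i^{(3)}|}^{-1}=\frac{1}{2}+1=\frac{3}{2}>1,
\]
refuting Conjecture \ref{conj}.

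The key conceptual point, easy to miss on a first reading, is that any $d$-partition which places all of its mass into a single coordinate contributes a multinomial reciprocal of exactly $1$, so the conjectured bound is already saturated by such a partition alone; with only the ordered-pair Bollob\'{a}s condition of Definition \ref{def-d}(3), one can still squeeze in additional partitions (as above, by using a ``thin'' partition on disjoint singletons), and the bound breaks immediately. The interesting remaining task, flagged in the abstract, is to prove that the bound $\leq 1$ \emph{does} hold for strong Bollob\'{a}s systems. For that direction I expect the main obstacle to be the correct probabilistic setup: one wants a random linear ordering of the ground set under which Definition \ref{def-d}(4) translates the two crossed intersections into the mutual disjointness of two ordering-events associated to distinct partitions, so that the multinomial denominators reappear as the probabilities of the appropriate interleavings of each $d$-partition's parts.
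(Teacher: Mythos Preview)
Your disproof is correct: the two-element family you exhibit is indeed a Bollob\'{a}s system of $3$-partitions of $[2]$ (this is verified in the paper's introduction), and the sum $\tfrac{1}{2}+1=\tfrac{3}{2}>1$ immediately refutes the conjecture for $d=3$; padding with empty parts $A_i^{(4)}=\cdots=A_i^{(d)}=\emptyset$ refutes it for all $d\ge 3$.

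The paper reaches the same conclusion by a less direct but more informative route. Rather than exhibiting a single counterexample, it proves (Theorem~\ref{main-Bollobas}) that the sum is always bounded by $N_B(d,s)$, and that this bound is \emph{tight}; it then computes $N_B(3,s)=\lfloor s/2\rfloor+1$ (Proposition~\ref{Bollobas-d=3}), which for $s\ge 2$ exceeds~$1$. Your two-partition example is precisely the $s=2$ instance of the family $\mathcal{B}$ built in that proposition, so the two arguments are compatible; yours is the minimal witness, while the paper's yields, for every $s$, a Bollob\'{a}s system on $[s]$ whose multinomial-reciprocal sum equals exactly $\lfloor s/2\rfloor+1$, together with a matching upper bound. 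In short: your argument settles the conjecture with one line, whereas the paper's machinery additionally pins down the correct replacement bound for $d=3$.
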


By Theorem \ref{B-pair}, Conjecture \ref{conj} holds for $d=2$. However, in Section \ref{section-B} we shall show that Conjecture \ref{conj} fails to hold for $d\geq3$ (see Theorem \ref{main-Bollobas}). In particular, we provide a new and tight upper bound for $d=3$ as follows.

\begin{Theorem}\label{main-Bollobas-d=3}
Suppose that $\mathcal{D}=\{(A_i^{(1)},A_i^{(2)},A_i^{(3)})\}_{1\le i\le m}$ is a Bollob\'{a}s system of $3$-partitions of $[n]$. Let $S=\bigcup_{i=1}^m\bigcup_{r=1}^3 A_i^{(r)}$ and $s=|S|$. Then
$$\sum_{i=1}^{m}\binom{|A_i^{(1)}|+|A_i^{(2)}|+|A_i^{(3)}|}{|A_i^{(1)}|,|A_i^{(2)}|,|A_i^{(3)}|}^{-1}\le \left\lfloor\frac{s}{2}\right\rfloor+1,$$
and the upper bound is tight.
\end{Theorem}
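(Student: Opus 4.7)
The plan is to adapt the random-permutation argument that proves Theorem~\ref{B-pair}. For a uniformly random linear order $\pi$ of $S$, let $E_i$ be the event that every element of $A_i^{(1)}$ precedes every element of $A_i^{(2)}$, which in turn precedes every element of $A_i^{(3)}$, under $\pi$. Conditioning on the set of positions occupied by $A_i^{(1)}\cup A_i^{(2)}\cup A_i^{(3)}$, one sees that $\Pr[E_i]$ equals $\binom{|A_i^{(1)}|+|A_i^{(2)}|+|A_i^{(3)}|}{|A_i^{(1)}|,|A_i^{(2)}|,|A_i^{(3)}|}^{-1}$, so by linearity of expectation the left-hand side of the theorem equals $\Ex[|T|]$, where $T:=\{i:E_i\text{ holds}\}$. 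It therefore suffices to show that $|T|\leq\lfloor s/2\rfloor+1$ for every fixed $\pi$.

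Fix $\pi$ and, for each $i\in T$, set $L_i:=\max\pi^{-1}(A_i^{(1)})$ with the convention $L_i=0$ when $A_i^{(1)}=\emptyset$, and $R_i:=\min\pi^{-1}(A_i^{(3)})$ with the convention $R_i=s+1$ when $A_i^{(3)}=\emptyset$. Because $E_i$ holds, the $\pi$-positions of $A_i^{(1)}, A_i^{(2)}, A_i^{(3)}$ lie in $[1,L_i]$, $(L_i,R_i)$, $[R_i,s]$ respectively, and in particular $L_i<R_i$. The heart of the argument is the claim that for any two distinct $i,j\in T$ the intervals $[L_i,R_i]$ and $[L_j,R_j]$ are strictly nested, i.e., either $L_i<L_j$ and $R_i>R_j$, or $L_i>L_j$ and $R_i<R_j$.

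To prove this claim, suppose toward contradiction that $L_i\leq L_j$ and $R_i\leq R_j$. Each of the three candidate intersections for the Bollob\'{a}s condition applied to the ordered pair $(i,j)$ involves two position ranges that are disjoint under these inequalities: $A_i^{(1)}\cap A_j^{(2)}$ lives in $[1,L_i]\cap(L_j,R_j)$, $A_i^{(1)}\cap A_j^{(3)}$ in $[1,L_i]\cap[R_j,s]$, and $A_i^{(2)}\cap A_j^{(3)}$ in $(L_i,R_i)\cap[R_j,s]$, and each of these three intersections of position ranges is empty. Hence the Bollob\'{a}s condition fails for $(i,j)$, a contradiction. The symmetric configuration $L_j\leq L_i$ and $R_j\leq R_i$ contradicts the Bollob\'{a}s condition for the ordered pair $(j,i)$. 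Thus only strict nesting survives.

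Sorting $T=\{i_1,\ldots,i_t\}$ so that the intervals grow by inclusion yields $L_{i_1}>L_{i_2}>\cdots>L_{i_t}\geq 0$ and $R_{i_1}<R_{i_2}<\cdots<R_{i_t}\leq s+1$, so $L_{i_1}\geq t-1$ and $R_{i_1}\leq s-t+2$; combined with $L_{i_1}<R_{i_1}$ this forces $t\leq\lfloor s/2\rfloor+1$, giving the bound. For tightness I would take the family of all full $3$-partitions $(A,B,C)$ of $[s]$ with $|A|=|C|$: encoding a partition $P$ by its type function $f_P:[s]\to\{1,2,3\}$, a short check shows that the Bollob\'{a}s condition $(P,P')$ fails precisely when $f_P\geq f_{P'}$ pointwise, and the constraint $|A|=|C|$ then forces $P=P'$; grouping by $k=|A|$ gives $\sum_{k=0}^{\lfloor s/2\rfloor}\binom{s}{k,s-2k,k}\binom{s}{k,s-2k,k}^{-1}=\lfloor s/2\rfloor+1$. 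The main obstacle I anticipate is executing the three-way disjointness check cleanly when some $A_i^{(1)}$ or $A_i^{(3)}$ is empty (so that $L_i=0$ or $R_i=s+1$), but the position-interval bookkeeping works uniformly under the stated conventions.
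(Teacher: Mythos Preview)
Your proof is correct and shares the paper's overall framework: both use the permutation method (the paper's Lemma~\ref{main-lemma} with $e=1$) to reduce the inequality to the deterministic bound $|T|\le\lfloor s/2\rfloor+1$, which in the paper's language is the statement $N_B(3,s)\le\lfloor s/2\rfloor+1$. The difference lies only in how that combinatorial bound is established. The paper (Proposition~\ref{Bollobas-d=3}) first passes to \emph{full} ordered $3$-partitions of $[s]$ and then shows, via a two-case analysis, that no two members can have $|A_i^{(2)}|$ equal or consecutive; your strict-nesting argument for the intervals $[L_i,R_i]$ is a cleaner repackaging of the same phenomenon (in the full case $L_i=|A_i^{(1)}|$ and $R_i-L_i-1=|A_i^{(2)}|$, so strict nesting forces the $|A_i^{(2)}|$ to differ by at least $2$) that avoids both the reduction to full partitions and the case split. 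Your tight example --- all full $3$-partitions of $[s]$ with $|A|=|C|$ --- coincides with the family the paper obtains by feeding the chain $\mathcal{B}$ of Proposition~\ref{Bollobas-d=3} into the construction of Theorem~\ref{main-Bollobas}, and your verification via the pointwise comparison of the type functions $f_P$ is more direct than the paper's type-based argument.
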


\subsection{Strong Bollob\'{a}s systems}

We establish the following theorem, which is a generalization of Lemma 3.1 in \cite{yue}, from $d=2$ to any $d$.

\begin{Theorem}\label{d-for-symmetric}
Suppose that $[n]$ is the disjoint union of some sets $X_1,\ldots,X_e$. Let $\mathcal{D}=\{(A_i^{(1)},\dots$, $A_i^{(d)})\}_{1\le i\le m}$ be a strong Bollob\'{a}s system of $d$-partitions of $[n]$. Let $S=\bigcup_{i=1}^m\bigcup_{r=1}^d A_i^{(r)}$ and $s_k=|S\cap X_k|$ for $k\in[e]$. Then
$$\sum_{i=1}^{m}\left(\prod_{k=1}^{e}\binom{\sum_{r=1}^{d}|A_i^{(r)}\cap X_k| }{|A_i^{(1)}\cap X_k|,\dots,|A_i^{(d)}\cap X_k|}\right)^{-1}\le \min_{l\in [e]}\frac{ \prod_{k=1}^{e}\binom{s_k+d-1}{d-1}}{\binom{s_l+d-1}{d-1}}.$$
\end{Theorem}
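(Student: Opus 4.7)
The plan is to adapt the random permutation-and-composition argument used in the proof of Theorem~\ref{X-skew-d-partition}, with the key modification that we strip the composition randomization off one chosen block $X_l$. This asymmetry will save one factor of $\binom{s_l+d-1}{d-1}$ in the final bound; minimising over $l\in[e]$ then gives the claimed inequality.

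Fix $l\in[e]$. I would independently sample, for each $k\in[e]\setminus\{l\}$, a uniformly random permutation $\pi_k$ of $S\cap X_k$ together with an independent uniformly random weak composition $(b_{k,1},\ldots,b_{k,d})$ of $s_k$ into $d$ nonnegative parts, with $B_k^{(r)}$ denoting the $r$-th block of $\pi_k$ prescribed by this composition. For $k=l$, I would sample only a uniformly random permutation $\pi_l$ of $S\cap X_l$ (no composition). Define $E_i$ as the event that $A_i^{(r)}\cap X_k\subseteq B_k^{(r)}$ for every $r\in[d]$ and every $k\ne l$, and that for $k=l$, the elements of $A_i^{(p)}\cap X_l$ precede those of $A_i^{(q)}\cap X_l$ in $\pi_l$ whenever $p<q$. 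Setting $\alpha_{i,k,r}=|A_i^{(r)}\cap X_k|$ and $\sigma_{i,k}=\sum_r\alpha_{i,k,r}$, a Vandermonde calculation as in Theorem~\ref{X-skew-d-partition} yields
\[
\Pr[E_i]=\binom{\sigma_{i,l}}{\alpha_{i,l,1},\ldots,\alpha_{i,l,d}}^{-1}\prod_{k\ne l}\binom{\sigma_{i,k}}{\alpha_{i,k,1},\ldots,\alpha_{i,k,d}}^{-1}\binom{\sigma_{i,k}+d-1}{d-1}^{-1}.
\]
Multiplying through by $\prod_{k\ne l}\binom{\sigma_{i,k}+d-1}{d-1}\le\prod_{k\ne l}\binom{s_k+d-1}{d-1}$ and summing over $i$ reduces the theorem to proving $\sum_i\Pr[E_i]\le 1$; equivalently, writing $\F=\{i:E_i\text{ holds}\}$, it suffices to show $\Ex[|\F|]\le 1$.

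My goal is then to argue the pointwise bound $|\F|\le 1$ using the strong Bollob\'as condition. Suppose $i<j$ both lie in $\F$; strong Bollob\'as provides $u_1<u_2$, $v_1<v_2$ together with $x\in A_i^{(u_1)}\cap A_j^{(v_2)}$ and $y\in A_i^{(u_2)}\cap A_j^{(v_1)}$. If $x\in X_k$ for some $k\ne l$, then $E_i$ and $E_j$ together force $x\in B_k^{(u_1)}\cap B_k^{(v_2)}$, which is empty unless $u_1=v_2$; similarly, $y\in X_k$ with $k\ne l$ would force $u_2=v_1$. The simultaneous equalities $u_1=v_2$ and $u_2=v_1$ would produce the impossible chain $u_1<u_2=v_1<v_2=u_1$, so at least one of the witnesses must lie in $X_l$. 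In the clean subcase where both $x,y\in X_l$, the $\pi_l$-layering of $E_i^{(l)}$ places $x$ before $y$ (since $u_1<u_2$), while that of $E_j^{(l)}$ places $y$ before $x$ (since $v_1<v_2$), producing the sought contradiction.

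The technically delicate step—and the main obstacle—is the residual subcase in which exactly one of $x,y$ lies in $X_l$, which can occur only when the chosen cross pattern uses a diagonal cell ($u_1=v_2$ or $u_2=v_1$). For $d=2$ this degeneracy cannot arise (which is why Yue's Lemma~3.1 is essentially immediate); for $d\ge3$ it must be handled by either reselecting the cross-pattern witnesses using additional non-empty intersection cells forced by the combination of strong Bollob\'as and the simultaneous validity of $E_i$ and $E_j$, or by refining the event $E_i^{(l)}$ so that a single $X_l$-witness already generates the required layering contradiction. Working this case out carefully is the essential novelty of the present argument compared to Yue's $d=2$ proof, and completes the verification of $|\F|\le 1$ and hence of the theorem.
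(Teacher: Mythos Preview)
Your approach has a genuine gap: the pointwise bound $|\F|\le 1$ is simply \emph{false} for $d\ge 3$, so the ``technically delicate'' subcase you flag cannot be resolved by either reselecting witnesses or refining $E_i^{(l)}$. Take $d=3$, $e=2$, $X_1=\{1\}$, $X_2=\{2\}$, $l=1$, and the two-element strong Bollob\'as system
\[
A_i=(\{1\},\{2\},\emptyset),\qquad A_j=(\emptyset,\{2\},\{1\}).
\]
Here the only nonempty intersection cells are $A_i^{(1)}\cap A_j^{(3)}=\{1\}$ and $A_i^{(2)}\cap A_j^{(2)}=\{2\}$, so the witnesses $x=1$, $y=2$ with $(u_1,u_2,v_1,v_2)=(1,2,2,3)$ are forced---there is nothing to reselect. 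With the (unique) permutations on each one-point block and the composition $(0,1,0)$ on $X_2$, both $E_i$ and $E_j$ hold, giving $|\F|=2$. Any refinement of $E_i^{(l)}$ strong enough to separate $i$ from $j$ here (for instance, adding a composition on $X_l$ as well) would drop $\Pr[E_i^{(l)}]$ by an extra factor $\binom{\sigma_{i,l}+d-1}{d-1}^{-1}$ and only recover the weaker skew bound of Theorem~\ref{X-skew-d-partition}.

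The paper avoids this by not attempting a one-shot pointwise argument. It applies the permutation-only event of Lemma~\ref{main-lemma} on \emph{all} blocks and then \emph{inducts on $e$}: fixing a block $l^*$, it groups the indices by their $X_{l^*}$-trace, observes that each trace-class is again a strong Bollob\'as system on the remaining $e-1$ blocks, and applies the inductive hypothesis there. The saved factor $\binom{s_{l^*}+d-1}{d-1}$ arises from bounding the number of trace-classes compatible with a fixed permutation of $S_{l^*}$, not from any pointwise disjointness across blocks. Your probabilistic setup does yield the base case $e=1$ (this is exactly Proposition~\ref{strong-n}), but replacing the induction by a global pointwise $|\F|\le1$ is precisely what fails once $e\ge 2$ and $d\ge 3$.
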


The case $e=1$ in Theorem \ref{d-for-symmetric} leads to the following theorem, which shows that Conjecture \ref{conj} holds for strong Bollob\'{a}s systems.

\begin{Theorem}\label{e=1-d-for-symmetric}
Suppose that $\mathcal{D}=\{(A_i^{(1)},\dots,A_i^{(d)})\}_{1\le i\le m}$ is a strong Bollob\'{a}s system of $d$-partitions of $[n]$. Then
$$\sum_{i=1}^{m}\binom{\sum_{r=1}^{d}|A_i^{(r)}|}{|A_i^{(1)}|,\ldots,|A_i^{(d)}|}^{-1}\le1,$$
and the upper bound is tight.
\end{Theorem}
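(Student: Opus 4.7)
The plan is to obtain Theorem \ref{e=1-d-for-symmetric} as a direct specialization of Theorem \ref{d-for-symmetric}. I would take $e=1$ and $X_1=[n]$, so that for every partition $(A_i^{(1)},\dots,A_i^{(d)})$ the restriction $A_i^{(r)}\cap X_1$ coincides with $A_i^{(r)}$, and the product $\prod_{k=1}^{e}$ appearing on the left-hand side of Theorem \ref{d-for-symmetric} collapses to a single multinomial coefficient $\binom{\sum_{r=1}^{d}|A_i^{(r)}|}{|A_i^{(1)}|,\dots,|A_i^{(d)}|}$. On the right-hand side, the minimum is taken over the singleton $l\in[1]$, so it reduces to the trivial ratio $\binom{s_1+d-1}{d-1}/\binom{s_1+d-1}{d-1}=1$, giving precisely the stated inequality. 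No further argument beyond Theorem \ref{d-for-symmetric} is required for the upper bound, so the main obstacle, namely the algebraic or inductive machinery behind the multi-block weighted inequality, is deferred to the proof of Theorem \ref{d-for-symmetric} rather than arising here.

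To confirm that the bound $1$ is sharp, I would exhibit a concrete strong Bollob\'{a}s system that attains equality. Take $m=n$ and, for each $i\in[n]$, set
$$(A_i^{(1)},A_i^{(2)},A_i^{(3)},\dots,A_i^{(d)})=(\{i\},[n]\setminus\{i\},\emptyset,\dots,\emptyset).$$
For any $1\le i<j\le n$, choosing $u_1=v_1=1$ and $u_2=v_2=2$ gives $A_i^{(u_1)}\cap A_j^{(v_2)}=\{i\}\ne\emptyset$ and $A_i^{(u_2)}\cap A_j^{(v_1)}=\{j\}\ne\emptyset$, so the strong Bollob\'{a}s property holds. Each multinomial coefficient equals $\binom{n}{1,n-1,0,\dots,0}=n$, hence
$$\sum_{i=1}^{n}\binom{n}{1,n-1,0,\dots,0}^{-1}=n\cdot\frac{1}{n}=1,$$
witnessing tightness. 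This construction also clarifies where the strong Bollob\'{a}s hypothesis is essential: only the two leading coordinates carry information, and the strict inequalities $u_1<u_2$ and $v_1<v_2$ are satisfied trivially by $(1,2)$, which would fail for a system where a single coordinate is forced to play both roles.
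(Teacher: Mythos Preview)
Your proposal is correct and, for the inequality, takes exactly the same route as the paper: specialize Theorem \ref{d-for-symmetric} to $e=1$. For tightness the paper instead uses the $n!$ full $n$-partitions of $[n]$ into singletons, but your example (which is the $k=1$ case of the symmetric construction the paper gives after Theorem \ref{symmetric}) is equally valid and arguably simpler.
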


\subsection{Symmetric Bollob\'{a}s systems}

Using the permutation method, Tuza \cite{tuza} proved the following result for symmetric Bollob\'{a}s systems.

\begin{Theorem}{\rm \cite[Proposition 9]{tuza}}\label{symmetric}
Suppose that $\mathcal{D}=\{(A_i^{(1)},\ldots,A_i^{(d)})\}_{1\le i\le m}$ is a symmetric Bollob\'{a}s system of $d$-partitions of $[n]$. Then
$$\sum_{i=1}^{m}\binom{\sum_{r=1}^{d}|A_i^{(r)}|}{|A_i^{(1)}|,\ldots,|A_i^{(d)}|}^{-1}\le1.$$
\end{Theorem}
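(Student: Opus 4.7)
The plan is to mimic Tuza's permutation method, the multi-part analogue of the classical Katona argument for the two-part Bollob\'as theorem. Write $B_i=A_i^{(1)}\cup\cdots\cup A_i^{(d)}$ and $a_i=|B_i|=\sum_{r=1}^d|A_i^{(r)}|$. I would sample a uniformly random permutation $\pi$ of $[n]$ and call $\pi$ \emph{compatible with index $i$} if, in the linear order that $\pi$ induces on $B_i$, every element of $A_i^{(1)}$ precedes every element of $A_i^{(2)}$, which in turn all precede those of $A_i^{(3)}$, and so on up through $A_i^{(d)}$.

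First I would compute $\Pr[\pi\text{ is compatible with }i]$. A uniformly random permutation of $[n]$ induces a uniformly random linear order on any $a_i$-subset, and exactly $\prod_{r=1}^d|A_i^{(r)}|!$ of the $a_i!$ orders on $B_i$ arrange the $A_i^{(r)}$'s into the required consecutive blocks, so the probability equals $\binom{a_i}{|A_i^{(1)}|,\ldots,|A_i^{(d)}|}^{-1}$.

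The main step is to show that no single $\pi$ can be compatible with two distinct indices $i<j$. Assuming otherwise, the symmetric Bollob\'as hypothesis supplies $1\le p<q\le d$ together with elements $x\in A_i^{(p)}\cap A_j^{(q)}$ and $y\in A_j^{(p)}\cap A_i^{(q)}$. Compatibility with $i$, applied to $x\in A_i^{(p)}$ and $y\in A_i^{(q)}$ together with $p<q$, forces $x$ to precede $y$ under $\pi$; compatibility with $j$, applied to $y\in A_j^{(p)}$ and $x\in A_j^{(q)}$ together with $p<q$, forces $y$ to precede $x$. The contradiction is the crux of the proof and the only place the full \emph{symmetric} (two-way) intersection condition is used; a merely one-sided intersection would not yield it, which is consistent with the failure of Conjecture \ref{conj} for $d\ge 3$ announced earlier.

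Once distinctness of the compatible index is established, the events $\{\pi\text{ is compatible with }i\}$ are pairwise disjoint, so summing their probabilities gives
\[
\sum_{i=1}^m \binom{a_i}{|A_i^{(1)}|,\ldots,|A_i^{(d)}|}^{-1}=\sum_{i=1}^m\Pr[\pi\text{ is compatible with }i]\le 1,
\]
which is the desired bound. I expect no technical obstacle beyond phrasing the compatibility condition so that the order-reversal contradiction in the main step goes through cleanly.
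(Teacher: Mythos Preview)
Your proof is correct and is essentially the permutation method the paper attributes to Tuza; the paper's own route is a double-counting reformulation of the same idea (Lemma~\ref{main-lemma} with $e=1$ combined with Proposition~\ref{strong-n}), factored through the more general strong Bollob\'as systems and then specialized via the implication symmetric $\Rightarrow$ strong. The only substantive difference is that the paper's contradiction in Proposition~\ref{strong-n} allows two independent pairs $u_1<u_2$ and $v_1<v_2$ rather than a single pair $p<q$, which is what buys the extension from symmetric to strong systems; your argument, using a common pair $p<q$, is exactly what the symmetric hypothesis provides and suffices for the stated theorem.
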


Since a symmetric Bollob\'{a}s system is a strong Bollob\'{a}s system, Theorem \ref{symmetric} can be seen as a corollary of Theorem \ref{e=1-d-for-symmetric}. The upper bound in Theorem \ref{symmetric} is tight. To show this, we can take the set of all $k$-subsets of $[n]$, $\mathcal{F}=\{F_1,F_2,\ldots,F_m\}$, where $m=\binom{n}{k}$. For $i\in[m]$, let $A_i^{(1)}=F_i$, $A_i^{(2)}=[n]\setminus F_i$, and $A_i^{(r)}=\emptyset$ for $3\leq r\leq d$. Then $\mathcal{D}=\{A_i^{(1)},\ldots,A_i^{(d)}\}_{1\le i\le m}$ is a symmetric Bollob\'{a}s system of $d$-partitions of $[n]$, and
$$\sum_{i=1}^{m}\binom{\sum_{r=1}^{d}|A_i^{(r)}|}{|A_i^{(1)}|,\ldots,|A_i^{(d)}|}^{-1}=\sum_{i=1}^{m}\binom{n}{k}^{-1}=1.$$

The following theorem is a straightforward corollary of Theorem \ref{d-for-symmetric}.

\begin{Theorem}\label{d-symmetric}
Suppose that $[n]$ is the disjoint union of some sets $X_1,\ldots,X_e$. Let $\mathcal{D}= \{(A_i^{(1)},\dots,A_i^{(d)})\}_{1\le i\le m}$ be a symmetric Bollob\'{a}s system of $d$-partitions of $[n]$. Let $s_k=|X_k\cap$ $(\bigcup_{i=1}^m\bigcup_{r=1}^d A_i^{(r)})|$ for $k\in[e]$. Then
$$\sum_{i=1}^{m}\left(\prod_{k=1}^{e}\binom{\sum_{r=1}^{d}|A_i^{(r)}\cap X_k| }{|A_i^{(1)}\cap X_k|,\dots,|A_i^{(d)}\cap X_k|}\right)^{-1}\le \min_{l\in [e]}\frac{ \prod_{k=1}^{e}\binom{s_k+d-1}{d-1}}{\binom{s_l+d-1}{d-1}}.$$
\end{Theorem}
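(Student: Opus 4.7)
The plan is short: Theorem \ref{d-symmetric} follows immediately from Theorem \ref{d-for-symmetric} once we verify that every symmetric Bollob\'as system of $d$-partitions is a strong Bollob\'as system. The implication $(5)\Rightarrow(4)$ was already announced in the remark following Definition \ref{def-d}, so the entire argument reduces to recording that containment and invoking the previously established inequality with the identical data $X_1,\dots,X_e$ and $s_1,\dots,s_e$.

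For completeness I would make the containment $(5)\Rightarrow(4)$ explicit as follows. Fix $1\le i<j\le m$, and let $1\le p<q\le d$ be the indices supplied by the symmetric condition, so that $A_i^{(p)}\cap A_j^{(q)}\ne\emptyset$ and $A_j^{(p)}\cap A_i^{(q)}\ne\emptyset$. Setting $u_1=v_1=p$ and $u_2=v_2=q$, we have $1\le u_1<u_2\le d$, $1\le v_1<v_2\le d$, together with $A_i^{(u_1)}\cap A_j^{(v_2)}=A_i^{(p)}\cap A_j^{(q)}\ne\emptyset$ and $A_i^{(u_2)}\cap A_j^{(v_1)}=A_i^{(q)}\cap A_j^{(p)}\ne\emptyset$, which is exactly the requirement in Definition \ref{def-d}(4). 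Hence $\mathcal{D}$ is a strong Bollob\'as system, and Theorem \ref{d-for-symmetric} delivers the desired bound verbatim.

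There is essentially no obstacle here: all the analytical content lies in Theorem \ref{d-for-symmetric}, and the derivation for the symmetric case is a one-line logical reduction. The only point worth a brief mental check is that the parameter $s_k$ in the two statements is defined in exactly the same way (the $k$-th part of the support $\bigcup_{i=1}^m\bigcup_{r=1}^d A_i^{(r)}$), so both the left-hand side and the right-hand side of the inequality transfer without any modification.
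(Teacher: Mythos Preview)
Your proposal is correct and matches the paper's own approach exactly: the paper states Theorem \ref{d-symmetric} as ``a straightforward corollary of Theorem \ref{d-for-symmetric},'' relying on the implication $(5)\Rightarrow(4)$ noted after Definition \ref{def-d}. Your explicit verification of $(5)\Rightarrow(4)$ via $u_1=v_1=p$, $u_2=v_2=q$ just spells out what the paper leaves implicit.
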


Taking $e=1$ in Theorem \ref{d-symmetric}, we get Theorem \ref{symmetric}.

\subsection{Weak Bollob\'{a}s systems}

A weak Bollob\'{a}s system of $2$-partitions is called \emph{a weak cross-intersecting set-pair system} which was proposed by Tuza \cite{t2} and considered further by Kir\'{a}ly, Nagy, P\'{a}lv\"{o}lgyi and Visontai \cite{knpv}. This paper establishes the following theorem.

\begin{Theorem}\label{improve-weak}
Suppose that $[n]$ is the disjoint union of some sets $X_1,\ldots,X_e$. Let $\mathcal{D}=\{(A_i^{(1)},\ldots,A_i^{(d)})\}_{1\le i\le m}$ be a weak Bollob\'{a}s system of $d$-partitions of $[n]$. Let $S=\bigcup_{i=1}^m\bigcup_{r=1}^d A_i^{(r)}$ and $s_k=|S\cap X_k|$ for $k\in[e]$. Then
$$\sum_{i=1}^{m}\left(\prod_{k=1}^{e}\binom{\sum_{r=1}^{d}|A_i^{(r)}\cap X_k|}{|A_i^{(1)}\cap X_k|,\ldots,|A_{i}^{(d)}\cap X_{k}|}\right)^{-1}\le \prod_{k=1}^{e}\binom{s_k+d-1}{d-1},$$
and the upper bound is tight.
\end{Theorem}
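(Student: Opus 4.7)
The strategy is to run a probabilistic argument in the spirit of the proof template for skew Bollob\'{a}s systems (Theorem \ref{X-skew-d-partition}), with the bookkeeping arranged so that the weaker condition defining a weak Bollob\'{a}s system is still strong enough to force disjointness of the witnessing combinatorial objects. Tightness will be immediate: since every skew Bollob\'{a}s system is in particular a weak Bollob\'{a}s system, the extremal construction already achieving equality in Theorem \ref{X-skew-d-partition} achieves it here as well.

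For each $k\in[e]$, I sample a uniformly random linear order $\pi_k$ of $S\cap X_k$, independently across $k$. For each $i\in[m]$, let $E_i$ be the event that in every $\pi_k$, the restriction to $T_i^{(k)}:=\bigcup_r(A_i^{(r)}\cap X_k)$ consists of the elements of $A_i^{(1)}\cap X_k$ first, then those of $A_i^{(2)}\cap X_k$, and so on through $A_i^{(d)}\cap X_k$. A direct multinomial count gives
\[
\Pr(E_i)=\prod_{k=1}^{e}\binom{\sum_{r=1}^{d}|A_i^{(r)}\cap X_k|}{|A_i^{(1)}\cap X_k|,\ldots,|A_i^{(d)}\cap X_k|}^{-1},
\]
so the left-hand side of the claimed inequality equals $\sum_i\Pr(E_i)$. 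It therefore suffices to show that for each realization of $(\pi_1,\ldots,\pi_e)$, the number of indices $i$ for which $E_i$ holds is at most $\prod_k\binom{s_k+d-1}{d-1}$.

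To this end, call a \emph{cut structure} a tuple $(c^{(1)},\ldots,c^{(e)})$ in which each $c^{(k)}=(c_1^{(k)},\ldots,c_d^{(k)})$ is a weak composition of $s_k$ into $d$ nonnegative parts; the number of such tuples is exactly the target bound. Given an $i$ with $E_i$ holding, I assign a \emph{canonical valid cut structure} by letting the partial sums $\beta_r^{(k)}:=\sum_{r'\le r}c_{r'}^{(k)}$ equal the position in $\pi_k$ of the last element of $(A_i^{(1)}\cup\cdots\cup A_i^{(r)})\cap X_k$ (interpreted as $\beta_{r-1}^{(k)}$ when that set is empty), subject to the convention $\beta_d^{(k)}:=s_k$. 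The ordering guaranteed by $E_i$ ensures that each $A_i^{(r)}\cap X_k$ lies inside the positional block $[\beta_{r-1}^{(k)}+1,\beta_r^{(k)}]$ of $\pi_k$, so the structure is indeed valid for $i$.

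The main step, and the only place where the weak Bollob\'{a}s hypothesis enters, is showing that the assignment $i\mapsto$ canonical cut structure is injective. If one cut structure were valid for both $i\ne j$, then for every $k$ and every $1\le p<q\le d$ the sets $A_i^{(p)}\cap X_k$ and $A_j^{(q)}\cap X_k$ would lie in the positional blocks $[\beta_{p-1}^{(k)}+1,\beta_p^{(k)}]$ and $[\beta_{q-1}^{(k)}+1,\beta_q^{(k)}]$ of $\pi_k$, which are disjoint since $p<q$. Since $X_1,\ldots,X_e$ partition $[n]$, this would force $A_i^{(p)}\cap A_j^{(q)}=\emptyset$ and, by the symmetric argument with $i$ and $j$ swapped, $A_j^{(p)}\cap A_i^{(q)}=\emptyset$ for all $p<q$, contradicting the defining ``or'' condition for weak Bollob\'{a}s systems. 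I expect the most delicate part of the write-up to be the careful handling of degenerate cases where some $A_i^{(r)}\cap X_k$ are empty, which the convention above is designed to accommodate. Taking expectations over $(\pi_1,\ldots,\pi_e)$ then completes the proof.
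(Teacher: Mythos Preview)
Your proof is correct and is essentially the same as the paper's: the random linear orders $(\pi_1,\ldots,\pi_e)$ are the probabilistic reformulation of the permutations in Lemma~\ref{main-lemma}, your identity $\sum_i\Pr(E_i)=\text{LHS}$ is precisely that lemma divided by $\prod_k s_k!$, and your cut-structure injection is an explicit version of the bound $|I_\sigma|\le\prod_k\binom{s_k+d-1}{d-1}$ that the paper records in one line (relying implicitly on the argument behind Propositions~\ref{skew-n} and~\ref{lem:weak-n}). Tightness via Theorem~\ref{X-skew-d-partition} is exactly what the paper does as well.
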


Tuza \cite{tuza} proposed the following inequality to solve various graph-theoretic problems including Ramsey-type problems and coverings problems related to complete graphs.

\begin{Theorem}{\rm \cite[Theorem 1]{tuza}}\label{weak}
Let $p_1,\ldots,p_d$ be arbitrary positive real numbers such that $p_1+\cdots+p_d=1$. Suppose that $\mathcal{D}=\{(A_i^{(1)},\ldots,A_i^{(d)})\}_{1\le i\le m}$ is a weak Bollob\'{a}s system of $d$-partitions of $[n]$. Then
$$\sum_{i=1}^{m}\prod_{r=1}^{d}p_r^{|A_i^{(r)}|}\le1.$$
\end{Theorem}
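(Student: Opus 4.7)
The plan is to prove Theorem~\ref{weak} by a simple probabilistic (random coloring) argument, which is the natural way to get weighted inequalities of this product form. Specifically, I would independently assign each element of $[n]$ a color from $[d]$, where each element receives color $r$ with probability $p_r$ (this is well-defined since $p_1+\cdots+p_d=1$ and each $p_r>0$). For each index $i\in[m]$, let $E_i$ denote the event that every element of $A_i^{(r)}$ receives color $r$, simultaneously for all $r\in[d]$. Because the $A_i^{(r)}$ are pairwise disjoint for fixed $i$ and the colorings are independent, one has
$$\Pr[E_i]=\prod_{r=1}^{d}p_r^{|A_i^{(r)}|}.$$

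The heart of the proof is to show that the events $E_1,\dots,E_m$ are pairwise disjoint; once this is done, a union bound gives $\sum_{i=1}^{m}\Pr[E_i]=\Pr[\bigcup_{i=1}^{m}E_i]\le 1$, which is exactly the desired inequality. To verify pairwise disjointness, fix $i\ne j$ and apply the weak Bollob\'{a}s condition: there exist $1\le p<q\le d$ such that $A_i^{(p)}\cap A_j^{(q)}\ne\emptyset$ or $A_j^{(p)}\cap A_i^{(q)}\ne\emptyset$. In the first case, any element $x$ in this intersection would have to be colored $p$ if $E_i$ occurs (since $x\in A_i^{(p)}$) and colored $q$ if $E_j$ occurs (since $x\in A_j^{(q)}$); because $p<q$ in particular $p\ne q$, so $E_i\cap E_j=\emptyset$. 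The second case is symmetric, exchanging the roles of $i$ and $j$.

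I do not anticipate a real obstacle here: the main conceptual point is simply the observation that the strict inequality $p<q$ built into the weak Bollob\'{a}s definition is exactly the feature that forces the random colors to disagree on a witnessing element, regardless of which of the two alternatives in the definition holds. The hardest part of writing this up cleanly will be bookkeeping to ensure that the probability computation $\Pr[E_i]=\prod_{r}p_r^{|A_i^{(r)}|}$ does not implicitly use elements of $[n]\setminus\bigcup_r A_i^{(r)}$; this is harmless because those elements are unconstrained by $E_i$ and the coloring of distinct elements is independent, so no correction factor appears.
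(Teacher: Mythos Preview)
Your argument is correct. Note, however, that the paper does not give its own proof of Theorem~\ref{weak}: the result is quoted from Tuza~\cite{tuza} and stated without proof, so there is no in-paper argument to compare against. Your random-coloring proof is the standard probabilistic argument for this inequality and works exactly as you describe. The paper's own machinery (Lemma~\ref{main-lemma}, averaging over permutations of the ground set) is used instead to prove the multinomial-coefficient inequality of Theorem~\ref{improve-weak}; that method does not directly handle arbitrary real weights $p_1,\dots,p_d$, so the probabilistic approach you chose is the natural one here.
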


We will show that the upper bound in Theorem \ref{weak} is tight.
	
\begin{Theorem}\label{weak-sharp}
Let $a_1,\ldots,a_d$ be any given positive integers. Then there exists a weak Bollob\'{a}s system of $d$-partitions of $[n]$, $\mathcal{D}=\{(A_i^{(1)},\ldots,A_i^{(d)})\}_{1\le i\le m}$ with $|A_i^{(r)}|\leq a_r$ for $i\in [m]$ and $r\in [d]$, such that
$$\sum_{i=1}^{m}\prod_{r=1}^{d}p_r^{|A_i^{(r)}|}=1$$
for every $p_1,\ldots,p_d$ satisfying that $0<p_1,\ldots,p_d<1$, and $p_1+\cdots+p_d=1$.
\end{Theorem}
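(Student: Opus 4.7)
The plan is to construct $\mathcal{D}$ via a sequential sampling scheme, so that the identity falls out of ``total probability equals $1$'' and the weak Bollob\'as property follows from a first-difference argument. Set $n = a_1+\cdots+a_d - (d-1)$. For any fixed $(p_1,\ldots,p_d)$ with $0<p_r<1$ and $\sum_r p_r = 1$, consider the Markov chain that at each step $t = 1, 2, \ldots$ independently draws a symbol $s_t \in [d]$ with $\Pr[s_t = r] = p_r$ and halts at the first step $\ell$ at which some symbol $r$ has appeared exactly $a_r$ times. By pigeonhole the chain halts after at most $n$ steps, so the set $\mathcal{P}$ of possible halting words $P_i = (s_1^{(i)},\ldots,s_{\ell_i}^{(i)})$ is finite and, crucially, does not depend on the particular choice of $(p_r)$.

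For each halting word $P_i$ and each $r \in [d]$, I would set
$$A_i^{(r)} = \{t \in [\ell_i] : s_t^{(i)} = r\} \subseteq [n].$$
This gives a $d$-partition of $[n]$ with $|A_i^{(r)}| \le a_r$, because the halting rule saturates exactly one of the quotas $a_r$ at step $\ell_i$ and leaves the others strictly below. The probability identity is then immediate: the chain halts with output $P_i$ with probability $\prod_{t=1}^{\ell_i} p_{s_t^{(i)}} = \prod_{r=1}^d p_r^{|A_i^{(r)}|}$, and summing over $i \in [m]$ yields $1$ since the chain halts deterministically in at most $n$ steps. Because this argument works for every admissible $(p_r)$, the desired identity holds on the entire open simplex $\{(p_1,\ldots,p_d) : 0<p_r<1,\ \sum_r p_r = 1\}$.

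For the weak Bollob\'as property, consider distinct $P_i$ and $P_j$. Neither can be a proper prefix of the other, since a proper prefix that already satisfies the halting rule would force the longer word to halt earlier. Hence there is a first index $k \le \min(\ell_i,\ell_j)$ at which the two words differ, say $s_k^{(i)} = p$ and $s_k^{(j)} = q$ with $p \ne q$. Then $k \in A_i^{(p)} \cap A_j^{(q)}$, and reordering so that the smaller of $p, q$ comes first produces indices $1 \le p' < q' \le d$ for which either $A_i^{(p')} \cap A_j^{(q')} \neq \emptyset$ or $A_j^{(p')} \cap A_i^{(q')} \neq \emptyset$. The only place where a beginner might slip is precisely this ``no proper prefix'' observation, which is what prevents two distinct halting words from being cross-intersecting-vacuous; once it is in place the remaining verifications are routine, so I do not expect a genuine obstacle in the argument.
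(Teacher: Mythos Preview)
Your argument is correct and takes a genuinely different route from the paper. You halt the sampling process at the \emph{first} moment some symbol reaches its quota $a_r$, which gives $n=\sum_r a_r-(d-1)$ and makes both verifications essentially one line: the prefix-free property of halting words immediately yields a first-difference index $k\in A_i^{(p)}\cap A_j^{(q)}$ with $p\neq q$, and total probability gives the identity. The paper instead runs a Banach--matchbox process until $d-1$ of the $d$ pockets are empty, takes $n=\sum_r a_r-1$, assembles $\mathcal D$ as an explicit union of sub-families $\mathcal D_{a_1,\ldots,k_u,\ldots,a_d}$, and verifies the weak Bollob\'as property through a three-case analysis. Your stopping rule is also the safer one: for $d\ge3$ the paper's ``until $d-1$ pockets are empty'' model is delicate because one may select an already empty pocket, and indeed for $d=3$, $a_1=a_2=a_3=1$ the paper's displayed sum evaluates to $2(p_1p_2+p_1p_3+p_2p_3)$, which is $2/3$ rather than $1$ at $p_1=p_2=p_3=1/3$. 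The one small point you might make explicit is that distinct halting words yield distinct $d$-partitions (immediate, since the partition of $[\ell_i]$ recovers the word), so that the sum over $i\in[m]$ is genuinely the total halting probability.
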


Theorem \ref{weak-sharp} generalizes \cite[Theorem 6]{t2} from $d=2$ to any $d$.

\subsection{Outline of the paper}

In Section \ref{section-pre}, we introduce a fundamental lemma (Lemma \ref{main-lemma}), which plays a crucial role in the proofs of theorems concerning various Bollob\'{a}s systems in the subsequent sections. In Section \ref{section-skew}, we prove Theorem \ref{X-skew-d-partition} on skew Bollob\'{a}s systems. Section \ref{section-B} proves Theorem \ref{main-Bollobas-d=3} on Bollob\'{a}s systems and gives a negative answer to Conjecture \ref{conj}. In Section \ref{section-ss}, we prove Theorems \ref{d-for-symmetric} and \ref{e=1-d-for-symmetric} on strong Bollob\'{a}s systems. In Section \ref{section-weak}, we prove Theorems \ref{improve-weak} and \ref{weak-sharp} on weak Bollob\'{a}s systems. Section \ref{section-concluding} concludes this paper.

\section{Preliminaries}\label{section-pre}

Throughout this paper, for two finite sets of integers $F_1$ and $F_2$, $F_1<F_2$ means $x_1<x_2$ for all $x_1\in F_1$ and $x_2\in F_2$. For any finite set $F$ (including the case where $F$ is the empty set $\emptyset$), we adopt the convention that both $\emptyset<F$ and $F<\emptyset$ hold. For a permutation $\sigma$ of a finite set $X$ and a subset $A\subseteq X$, write $\sigma(A)=\left\{ \sigma(a)\mid a\in A \right\}$.


\begin{Lemma}\label{main-lemma}
Suppose that $[n]$ is the disjoint union of some sets $X_1,\ldots,X_e$. Let $d\geq2$ and $\mathcal{D}=\{(A_i^{(1)},\dots,A_i^{(d)})\}_{1\le i\le m}$ be a family of $d$-partitions of $[n]$. Let $S=\bigcup_{i=1}^m\bigcup_{r=1}^d A_i^{(r)}$. Let $S_k=S\cap X_k$ and $s_k=|S_k|$ for $k\in [e]$. Let $G(S)$ be the set of all permutations of $S$ that keeps $S_k$ invariant for each $k\in [e]$. For $\sigma\in G(S)$, write
$$I_{\sigma}=\{i\in[m]\mid\sigma(A_i^{(1)}\cap X_k)<\sigma(A_i^{(2)}\cap X_k)<\cdots<\sigma(A_i^{(d)}\cap X_k)\ \text{for\ every\ } k\in[e]\}.$$
Then
$$\sum_{i=1}^{m}\left(\prod_{k=1}^{e}s_k!\binom{\sum_{r=1}^{d}|A_i^{(r)}\cap X_k| }{|A_i^{(1)}\cap X_k|,\dots,|A_i^{(d)}\cap X_k|}^{-1}\right)=\sum_{\sigma\in G(S)}|I_{\sigma}|.$$
\end{Lemma}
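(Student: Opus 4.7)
The plan is to prove the identity by double counting, first exchanging the order of summation and then evaluating, for each fixed $i$, the number of $\sigma \in G(S)$ with $i \in I_\sigma$. Writing
$$\sum_{\sigma \in G(S)} |I_\sigma| = \sum_{i=1}^m \bigl|\{\sigma \in G(S) : i \in I_\sigma\}\bigr|,$$
I would then identify $G(S)$ with $\prod_{k=1}^e \mathrm{Sym}(S_k)$ (noting in particular that $|G(S)| = \prod_k s_k!$), which is legitimate because the $S_k$ are pairwise disjoint. Since the condition $i \in I_\sigma$ is a conjunction over $k \in [e]$ of a requirement that depends only on the restriction $\sigma|_{S_k}$, the count factors as
$$\bigl|\{\sigma \in G(S) : i \in I_\sigma\}\bigr| = \prod_{k=1}^e N_{i,k},$$
where $N_{i,k}$ denotes the number of permutations $\pi$ of $S_k$ satisfying $\pi(A_i^{(1)} \cap X_k) < \cdots < \pi(A_i^{(d)} \cap X_k)$.

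The combinatorial heart of the proof is then the identity
$$N_{i,k} = \frac{s_k!}{\binom{\sum_{r=1}^d |A_i^{(r)} \cap X_k|}{|A_i^{(1)} \cap X_k|,\, \ldots,\, |A_i^{(d)} \cap X_k|}}.$$
Writing $T_{i,k} := \bigcup_{r=1}^d (A_i^{(r)} \cap X_k)$ and $t_{i,k} := |T_{i,k}|$, I would construct each such $\pi$ in three steps: (i) choose the image $T' := \pi(T_{i,k})$ as an arbitrary $t_{i,k}$-subset of $S_k$, giving $\binom{s_k}{t_{i,k}}$ options; (ii) observe that, after listing $T'$ in increasing order, the ordering condition forces the first $|A_i^{(1)} \cap X_k|$ elements of $T'$ to be $\pi(A_i^{(1)} \cap X_k)$, the next $|A_i^{(2)} \cap X_k|$ to be $\pi(A_i^{(2)} \cap X_k)$, and so on, leaving $\prod_{r=1}^d |A_i^{(r)} \cap X_k|!$ ways to match individual elements within the blocks; (iii) extend $\pi$ to an arbitrary bijection $S_k \setminus T_{i,k} \to S_k \setminus T'$, contributing $(s_k - t_{i,k})!$ further choices. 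Multiplying these factors and using $\binom{s_k}{t_{i,k}}(s_k - t_{i,k})! = s_k!/t_{i,k}!$ produces the displayed formula for $N_{i,k}$.

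Substituting back into the factorised count and summing over $i$ yields exactly the left-hand side of the lemma. The one point requiring a little care is the empty-set convention $\emptyset < F$, $F < \emptyset$ recalled at the start of this section: when $A_i^{(r)} \cap X_k = \emptyset$ for some $r$, the corresponding factor $0! = 1$ is harmless and the strict inequality involving $\emptyset$ is automatically satisfied, so both the block-ordering argument in step~(ii) and the final multinomial expression remain valid. Beyond this minor bookkeeping, the argument is a clean double count and I do not foresee any substantive obstacle.
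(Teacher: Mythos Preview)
Your proposal is correct and follows essentially the same double-counting argument as the paper: both swap the order of summation, factor the count over $k\in[e]$ via $G(S)\cong\prod_k\mathrm{Sym}(S_k)$, and compute each factor as $\binom{s_k}{t_{i,k}}\bigl(\prod_r |A_i^{(r)}\cap X_k|!\bigr)(s_k-t_{i,k})!$. The paper's proof is terser---it writes down the product formula in one line without spelling out your steps (i)--(iii)---but the underlying reasoning is identical.
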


\begin{proof}
We apply double counting to the set of ordered pairs $(i,\sigma)$ where $i\in [m]$ and $\sigma\in G(S)$ such that $\sigma(A_i^{(1)}\cap X_k)<\sigma(A_i^{(2)}\cap X_k)<\cdots<\sigma(A_i^{(d)}\cap X_k)$ for every $k\in[e]$. Then
\begin{align*}
\sum_{\sigma\in G(S)}|I_{\sigma}|= &\sum_{i=1}^m
\left(\prod_{k=1}^{e} \left(\binom{s_k}{\sum\limits_{r=1}^{d}|A_i^{(r)}\cap X_k|}
\left(\prod_{r=1}^{d} |A_i^{(r)}\cap X_k|!\right)
\left(s_k-\sum_{r=1}^{d}|A_i^{(r)}\cap X_k|\right)!\right)\right)\nonumber\\
= & \sum_{i=1}^{m}\left(\prod_{k=1}^{e}s_k!\binom{\sum_{r=1}^{d}|A_i^{(r)}\cap X_k| }{|A_i^{(1)}\cap X_k|,\dots,|A_i^{(d)}\cap X_k|}^{-1}\right).
\end{align*}
\end{proof}

For fixed integers $d\geq 2$ and $s\in[n]$, denote by $N_{B}(d,s)$ ($N_{skew}(d,s)$, $N_{strong}(d,s)$, $N_{weak}(d,s)$, respectively) the maximum possible size among all Bollob\'{a}s systems (skew Bollob\'{a}s systems, strong Bollob\'{a}s systems, weak Bollob\'{a}s systems, respectively), $\mathcal{D}=\{(A_i^{(1)},\ldots,A_i^{(d)})\}_{1\le i\le m}$, of $d$-partitions of $[n]$ satisfying that $A_i^{(1)}<A_i^{(2)}<\cdots<A_i^{(d)}$ for every $i\in[m]$ and $s=|\bigcup_{i=1}^m\bigcup_{r=1}^d A_i^{(r)}|$.

We introduce the lexicographic ordering for sequences of integers as follows: $(a_1,\ldots,a_d)\le_L(b_1,\ldots,b_d)$ if and only if either the sequences are identical or there exists an index $l\in[d]$ such that $a_i=b_i$ for all $1\le i<l$, and $a_l<b_l$. This ordering compares sequences element-wise from left to right, with the first differing element determining the order.

\begin{Proposition}\label{skew-n}
$$N_{skew}(d,s)=\binom{s+d-1}{d-1}.$$
\end{Proposition}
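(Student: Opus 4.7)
The plan is to prove $N_{skew}(d,s) = \binom{s+d-1}{d-1}$ in two halves: a matching construction for the lower bound, and an injection argument for the upper bound.

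For the lower bound, I would fix $S=\{t_1<\cdots<t_s\}\subseteq[n]$ and consider all $\binom{s+d-1}{d-1}$ full ordered $d$-partitions of $S$, indexed by weakly increasing integer sequences $0=c_0\le c_1\le\cdots\le c_{d-1}\le c_d=s$ via $A_{\vec c}^{(r)}=\{t_{c_{r-1}+1},\ldots,t_{c_r}\}$; each such partition automatically satisfies $A_{\vec c}^{(1)}<A_{\vec c}^{(2)}<\cdots<A_{\vec c}^{(d)}$, and together they cover $S$. After ordering these partitions in reverse lexicographic order of $(c_1,\ldots,c_{d-1})$, I would verify the skew Bollob\'as property by pinpointing, at the first differing coordinate $p$ between $\vec c_i$ and $\vec c_j$ for $i<j$ (where $(\vec c_i)_p>(\vec c_j)_p$), the common element $t_{(\vec c_j)_p+1}$ that lies in $A_i^{(p)}$ and in some $A_j^{(r)}$ with $r>p$.

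For the upper bound, the plan is to construct a canonical injection from the given system into the set of non-decreasing functions $S\to[d]$, whose cardinality equals $\binom{s+d-1}{d-1}$ by stars-and-bars. To each partition $(A_i^{(1)},\ldots,A_i^{(d)})$ I would associate a function $\phi_i:S\to[d]$ that agrees with the block indicator where defined, and that assigns each $t\in S\setminus\bigcup_r A_i^{(r)}$ to some block index compatible with the chain $A_i^{(1)}<\cdots<A_i^{(d)}$ (for concreteness, the largest such admissible index). Injectivity would then follow directly from the skew Bollob\'as hypothesis: if $\phi_i=\phi_j$ for distinct $i,j$, then choosing $p<q$ and $t\in A_i^{(p)}\cap A_j^{(q)}$ forces $\phi_i(t)=p$ and $\phi_j(t)=q$, contradicting $\phi_i=\phi_j$.

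The step that requires the most care is verifying that a valid block index always exists for each missing element and that the resulting $\phi_i$ is non-decreasing on $S$. This hinges on using the conventions $\emptyset<F$ and $F<\emptyset$ from the preliminaries to reason about the position of $t$ relative to the non-empty blocks of partition $i$, together with a short case split depending on whether $t$ lies before the first, strictly between two consecutive, or after the last non-empty block of $(A_i^{(1)},\ldots,A_i^{(d)})$. Once this monotonicity is in place, the counting bound $m\le\binom{s+d-1}{d-1}$ is immediate and, combined with the construction, yields the desired equality.
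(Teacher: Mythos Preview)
Your proof is correct and follows essentially the same route as the paper: for the lower bound, list all $\binom{s+d-1}{d-1}$ full increasing $d$-partitions of $S$ in decreasing lexicographic order of their cut-point (equivalently size) vectors; for the upper bound, canonically complete each partition to a full increasing $d$-partition of $S$ (equivalently, a non-decreasing function $S\to[d]$) and count. Your handling of the upper bound is in fact more careful than the paper's: you explicitly invoke the skew Bollob\'as hypothesis to prove that the completion is injective, whereas the paper asserts---imprecisely---that the bound already holds for arbitrary families with increasing blocks, glossing over why distinct members cannot complete to the same full partition.
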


\begin{proof}
Let $\mathcal{D}=\{(A_i^{(1)},\ldots,A_i^{(d)})\}_{1\le i\le m}$ be a family of $d$-partitions of $[n]$ (not necessarily a skew Bollob\'{a}s system) such that $A_i^{(1)}<A_i^{(2)}<\cdots<A_i^{(d)}$ for $i\in[m]$, and $|\bigcup_{i=1}^m\bigcup_{r=1}^d A_i^{(r)}|=s$. Let $S=\bigcup_{i=1}^m\bigcup_{r=1}^d A_i^{(r)}$. For each $j\in[m],$ we can naturally add the elements in $S\setminus \bigcup_{r=1}^d A_j^{(r)}$ to $A_j^{(1)},\ldots,A_j^{(d)}$ to produce a family of full $d$-partitions of $S$, $\mathcal{D'}=\{(B_i^{(1)},\ldots,B_i^{(d)})\}_{1\le i\le m}$, such that $B_i^{(1)}<B_i^{(2)}<\cdots<B_i^{(d)}$ for $i\in[m]$. There are $\binom{s+d-1}{d-1}$ ordered partitions $(a_1,\ldots,a_d)$ of the integer $s$ where $a_1,\ldots,a_d$ are non-negative integers satisfying $a_1+\cdots+a_d=s$. So $N_{skew}(d,s)\leq\binom{s+d-1}{d-1}.$

Let $\mathcal{A}=\{(A_i^{(1)},\ldots,A_i^{(d)})\}_{1\le i\le m}$ consist of all full $d$-partitions of $[s]$ such that $A_i^{(1)}<A_i^{(2)}<\cdots<A_i^{(d)}$ for $i\in[m]$, and $(|A_j^{(1)}|,\ldots,|A_j^{(d)}|)\le_L(|A_i^{(1)}|,\ldots,|A_i^{(d)}|)$ for all $1\le i<j\le m$. Then $\mathcal{A}$ is a skew Bollob\'{a}s system and $|\mathcal{A}|=m=\binom{s+d-1}{d-1}.$ Therefore, $N_{skew}(d,s)=\binom{s+d-1}{d-1}.$
\end{proof}

\begin{Remark}
Let $\mathcal{D}=\{(A_i^{(1)},\ldots,A_i^{(d)})\}_{1\leq i\leq m}$ be a skew Bollob\'{a}s system of $d$-partitions of $[n]$. Apply Lemma $\ref{main-lemma}$ with $e=1$ and write $s=|\bigcup_{i=1}^m\bigcup_{r=1}^d A_i^{(r)}|$. Since $|I_{\sigma}|\leq N_{skew}(d,s)$, it follows from Lemma $\ref{main-lemma}$ and Proposition $\ref{skew-n}$ that
$$s!\sum_{i=1}^{m} \binom{\sum_{r=1}^{d}|A_i^{(r)}|}{|A_{i}^{(1)}|,\ldots,|A_{i}^{(d)}|}^{-1}
=\sum_{\sigma\in G(S)}|I_{\sigma}|
\le s!\binom{s+d-1}{d-1}\le s!\binom{n+d-1}{d-1}.$$
Dividing both sides by $s!$ yields Theorem $\ref{skew-d-partition}$.
\end{Remark}

\begin{Proposition}\label{strong-n}
$$N_{strong}(d,s)=1.$$
\end{Proposition}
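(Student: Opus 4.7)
The plan is to bound $N_{strong}(d,s)$ from below and from above separately, with the upper bound being the substantive part.

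For the lower bound, the single ordered $d$-partition $(A^{(1)},A^{(2)},\ldots,A^{(d)})=([s],\emptyset,\ldots,\emptyset)$ of $[s]$ has support of size exactly $s$, satisfies $A^{(1)}<A^{(2)}<\cdots<A^{(d)}$ under the convention that $F<\emptyset$ and $\emptyset<F$ always hold, and vacuously satisfies the strong Bollob\'{a}s property (there is no pair of indices $i<j$ to check). Hence $N_{strong}(d,s)\ge 1$.

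For the upper bound, I would argue by contradiction. Suppose a strong Bollob\'{a}s system of ordered $d$-partitions contains two members, the $i$-th and the $j$-th with $i<j$. The strong Bollob\'{a}s condition furnishes indices $1\le u_1<u_2\le d$ and $1\le v_1<v_2\le d$ with $A_i^{(u_1)}\cap A_j^{(v_2)}\ne\emptyset$ and $A_i^{(u_2)}\cap A_j^{(v_1)}\ne\emptyset$. Choose $x$ in the first intersection and $y$ in the second. The ordering of the $i$-th partition forces $x<y$ (since $x\in A_i^{(u_1)}$, $y\in A_i^{(u_2)}$ with $u_1<u_2$, and both parts are non-empty), while the ordering of the $j$-th partition forces $y<x$ (since $y\in A_j^{(v_1)}$, $x\in A_j^{(v_2)}$ with $v_1<v_2$, and both parts are non-empty). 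This is the desired contradiction, giving $N_{strong}(d,s)\le 1$.

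The principal—indeed only—point requiring care is the convention that $\emptyset<F$ and $F<\emptyset$ both hold: one must observe that the chosen witnesses $x$ and $y$ automatically force the four parts $A_i^{(u_1)}, A_i^{(u_2)}, A_j^{(v_1)}, A_j^{(v_2)}$ to be non-empty, so that the orderings reduce to genuine numerical inequalities and the contradiction is not vacuous. Once this is noted the argument is complete, and combined with Lemma~\ref{main-lemma} it will readily yield Theorem~\ref{e=1-d-for-symmetric} along the same route by which the remark after Proposition~\ref{skew-n} deduced Theorem~\ref{skew-d-partition}.
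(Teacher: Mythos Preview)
Your proof is correct and follows essentially the same route as the paper's: both argue that any two ordered members of a strong Bollob\'{a}s system would yield witnesses $x\in A_i^{(u_1)}\cap A_j^{(v_2)}$ and $y\in A_i^{(u_2)}\cap A_j^{(v_1)}$ forcing the contradictory inequalities $x<y$ and $y<x$. Your explicit treatment of the lower bound and the remark that the witnesses render the relevant parts non-empty (so the convention about $\emptyset$ does not trivialize the inequalities) are small clarifications beyond what the paper records, but the argument is otherwise identical.
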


\begin{proof}
Let $\mathcal{D}=\{(A_i^{(1)},\ldots,A_i^{(d)})\}_{1\le i\le m}$ be a strong Bollob\'{a}s system of $d$-partitions of $[n]$ such that $A_i^{(1)}<A_i^{(2)}<\cdots<A_i^{(d)}$ for $i\in[m]$.
Suppose on the contradiction that there exists $i<j$ such that $A_i^{(u_1)}\cap A_j^{(v_2)}\neq\emptyset$ and $A_i^{(u_2)}\cap A_j^{(v_1)}\neq\emptyset$ for some $u_1<u_2$ and $v_1<v_2$. Then there exist $x\in A_i^{(u_1)}\cap A_j^{(v_2)}$ and $y\in A_i^{(u_2)}\cap A_j^{(v_1)}$. Since $A_i^{(u_1)}<A_i^{(u_2)}$ we have $x<y$. Since $A_j^{(v_1)}<A_j^{(v_2)}$, we have $y<x$, a contradiction.
\end{proof}

\begin{Proposition}\label{lem:weak-n}
$$N_{weak}(d,s)=N_{skew}(d,s).$$
\end{Proposition}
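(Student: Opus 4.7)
My plan is to establish the two inequalities separately. For the lower bound $N_{weak}(d,s) \geq N_{skew}(d,s)$, I would simply note that every skew Bollob\'as system is automatically a weak Bollob\'as system by the implication $(2)\Rightarrow(1)$ recorded after Definition \ref{def-d}. Hence the extremal family of size $\binom{s+d-1}{d-1}$ exhibited in the proof of Proposition \ref{skew-n} is also a weak Bollob\'as system with the required ordering, witnessing $N_{weak}(d,s) \geq \binom{s+d-1}{d-1} = N_{skew}(d,s)$.

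For the reverse inequality, I would follow the counting strategy of Proposition \ref{skew-n}. Given any weak Bollob\'as system $\mathcal{D}=\{(A_i^{(1)},\dots,A_i^{(d)})\}_{1\le i\le m}$ satisfying $A_i^{(1)}<\cdots<A_i^{(d)}$ for each $i$ and $|S|=s$ where $S=\bigcup_i\bigcup_r A_i^{(r)}$, I would extend each tuple to an ordered full $d$-partition $(B_i^{(1)},\dots,B_i^{(d)})$ of $S$ by distributing each element of $S\setminus\bigcup_r A_i^{(r)}$ into some part that preserves the chain ordering (such a position always exists because $A_i^{(1)}<\cdots<A_i^{(d)}$ leaves admissible slots between the nonempty blocks). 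Since an ordered full $d$-partition of $S$ is uniquely determined by its size vector, a composition of $s$ into $d$ non-negative parts, there are exactly $\binom{s+d-1}{d-1}$ such partitions in total. Consequently, it suffices to prove that the extension map $i\mapsto(B_i^{(1)},\dots,B_i^{(d)})$ is injective.

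The heart of the argument, and the only place where the weak Bollob\'as hypothesis enters, is this injectivity. Suppose for contradiction that $B_i=B_j$ for some $i<j$. The weak Bollob\'as condition supplies indices $p<q$ and an ordered pair $(a,b)\in\{(i,j),(j,i)\}$ with $A_a^{(p)}\cap A_b^{(q)}\neq\emptyset$. But $A_a^{(p)}\subseteq B_a^{(p)}=B_b^{(p)}$ and $A_b^{(q)}\subseteq B_b^{(q)}$, while $B_b^{(p)}\cap B_b^{(q)}=\emptyset$ since they are distinct parts of the same partition $B_b$. This forces $A_a^{(p)}\cap A_b^{(q)}=\emptyset$, a contradiction. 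I expect this step to be essentially routine; the only subtle point worth flagging is that the extension of each individual partition is not canonical when some $A_i^{(r)}$ is empty, but the contradiction depends only on the equality $B_i=B_j$ and never on how the missing elements were distributed, so any admissible extension works. Combining both bounds yields $N_{weak}(d,s)=N_{skew}(d,s)$.
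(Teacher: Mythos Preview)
Your proof is correct and follows the same route as the paper: the lower bound via the implication (2)$\Rightarrow$(1), and the upper bound via the extension-to-full-partitions argument from the first paragraph of Proposition~\ref{skew-n}. You make the injectivity of the extension map explicit (using the weak Bollob\'as hypothesis), whereas the paper simply invokes that paragraph wholesale; this added detail is welcome, since the parenthetical ``not necessarily a skew Bollob\'as system'' there glosses over exactly the point you spell out.
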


\begin{proof}
Since any skew Bollob\'{a}s system is a weak Bollob\'{a}s system, we have $N_{skew}(d,s)\leq N_{weak}(d,s)$. On the other hand, by the first paragraph of the proof of Proposition \ref{skew-n}, we have $N_{weak}(d,s)\leq \binom{s+d-1}{d-1}=N_{skew}(d,s)$. Therefore, $N_{weak}(d,s)=N_{skew}(d,s)$.
\end{proof}


\section{Skew Bollob\'{a}s systems}\label{section-skew}

This section is devoted to proving Theorem \ref{X-skew-d-partition}.

\begin{proof}[Proof of Theorem $\ref{X-skew-d-partition}$.]
Apply Lemma $\ref{main-lemma}$. We have
\begin{align*}
\sum_{i=1}^{m}\left(\prod_{k=1}^{e}s_k!\binom{\sum_{r=1}^{d}|A_i^{(r)}\cap X_k| }{|A_i^{(1)}\cap X_k|,\dots,|A_i^{(d)}\cap X_k|}^{-1}\right)
=
\sum_{\sigma\in G(S)}|I_{\sigma}|\leq \left(\prod_{k=1}^{e}s_k!\right) \left(\prod_{k=1}^{e}\binom{s_k+d-1}{d-1}\right).
\end{align*}
Dividing both sides by $\prod_{k=1}^{e}s_k!$ yields the desired inequality. To prove that the upper bound is tight, we need to construct a skew Bollob\'{a}s system of $d$-partitions of $[n]$, $\mathcal{D}=\{(A_i^{(1)},\ldots,$ $A_i^{(d)})\}_{1\le i\le m}$, such that
\begin{align}\label{eq:2}
\sum_{i=1}^{m}\left(\prod_{k=1}^{e}\binom{\sum_{r=1}^{d}|A_i^{(r)}\cap X_k|}{|A_i^{(1)}\cap X_k|,\ldots,|A_{i}^{(d)}\cap X_{k}|}\right)^{-1}
= \prod_{k=1}^{e}\binom{s_k+d-1}{d-1}.
\end{align}

Let $\mathcal{D}=\{(A_i^{(1)},\ldots,A_i^{(d)})\}_{1\le i\le d^n}$ be the set of all full $d$-partitions of $[n]$ such that $(|A_j^{(1)}|,\ldots$, $|A_j^{(d)}|)\le_L(|A_l^{(1)}|,\ldots,|A_l^{(d)}|)$ for all $1\le l<j\le d^n$. It was shown by Heged\"{u}s and Frankl in \cite[Example 2]{hf} that $\mathcal{D}$ is a skew Bollob\'{a}s system. Suppose that $[n]$ is partitioned into disjoint sets $X_1,\dots,X_e$. Write $s_k=|X_k|$ for $k\in[e]$. We will show that $\mathcal D$ satisfies \eqref{eq:2}.

For $i\in [d^n]$, $r\in[d]$ and $k\in[e]$, write $\delta_k^{(r)}=|A_i^{(r)}\cap X_k|$. We call $((\delta_1^{(1)},\ldots,\delta_1^{(d)}),(\delta_2^{(1)},\ldots,\delta_2^{(d)}),$ $\ldots,(\delta_e^{(1)},\ldots,\delta_e^{(d)}))$ the {\em type} of $(A_i^{(1)},\ldots,A_i^{(d)})$. There are ${\textstyle\prod_{k=1}^{e}}\binom{s_k+d-1}{d-1}$ different types contributed by $\mathcal D$ (note that $\mathcal D$ is full), because there are $\binom{s_k+d-1}{d-1}$ ordered partitions $(\delta_k^{(1)},\ldots,\delta_k^{(d)})$ of $s_k$ such that $\delta_k^{(1)},\ldots,\delta_k^{(d)}$ are non-negative integers satisfying $\delta_k^{(1)}+\cdots+\delta_k^{(d)}=s_k$.

For each $(A_i^{(1)},\ldots, A_i^{(d)})\in \mathcal{D}$ with $i\in[d^n]$, its contribution to the sum on the left-hand side of \eqref{eq:2} is ${\textstyle\prod_{k=1}^{e}}\binom{s_k}{\delta_k^{(1)},\ldots,\delta_k^{(d)}}^{-1}$. On the other hand, there are altogether ${\textstyle\prod_{k=1}^{e}}\binom{s_k}{\delta_k^{(1)},\ldots,\delta_k^{(d)}}$ tuples $(A_j^{(1)},\ldots,A_j^{(d)})$'s in $\mathcal D$ that have the same type as $(A_i^{(1)},\ldots,A_i^{(d)})$. So the total contribution of a type of $(A_i^{(1)},\ldots,A_i^{(d)})$'s to the sum is exactly $1$. Since there are ${\textstyle\prod_{k=1}^{e}}\binom{s_k+d-1}{d-1}$ different types contributed by $\mathcal D$, after summing up the contributions, the left-hand side of \eqref{eq:2} equals ${\textstyle\prod_{k=1}^{e}}\binom{s_k+d-1}{d-1}$.
\end{proof}

\section{Disproof of Conjecture \ref{conj} on Bollob\'{a}s systems}\label{section-B}

In this section we prove Theorem \ref{main-Bollobas-d=3} and give a negative answer to Conjecture \ref{conj}.

\begin{Theorem}\label{main-Bollobas}
Suppose that $\mathcal{D}=\{(A_i^{(1)},\ldots, A_i^{(d)})\}_{1\le i\le m}$ is a Bollob\'{a}s system of $d$-partitions of $[n]$. Let $S=\bigcup_{i=1}^m\bigcup_{r=1}^d A_i^{(r)}$ and $s=|S|$. Then
$$\sum_{i=1}^{m}\binom{\sum_{r=1}^{d}|A_i^{(r)}|}{|A_i^{(1)}|,\dots,|A_i^{(d)}|}^{-1}\le N_B(d,s),$$
and the upper bound is tight.
\end{Theorem}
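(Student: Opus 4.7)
\textbf{The plan} is to mirror the proof of Theorem~\ref{X-skew-d-partition}, using Lemma~\ref{main-lemma} as the engine and replacing the skew cap $\binom{s+d-1}{d-1}$ from Proposition~\ref{skew-n} by the Bollob\'{a}s cap $N_B(d,s)$. Applying Lemma~\ref{main-lemma} with $e=1$ gives
\[s!\sum_{i=1}^{m}\binom{\sum_{r=1}^{d}|A_i^{(r)}|}{|A_i^{(1)}|,\dots,|A_i^{(d)}|}^{-1}=\sum_{\sigma\in G(S)}|I_\sigma|.\]
For every $\sigma\in G(S)$, the subfamily $\{(\sigma(A_i^{(1)}),\dots,\sigma(A_i^{(d)})):i\in I_\sigma\}$ is a Bollob\'{a}s system of $d$-partitions, since $\sigma(A_i^{(p)})\cap\sigma(A_j^{(q)})=\sigma(A_i^{(p)}\cap A_j^{(q)})$, and its members are ordered by the definition of $I_\sigma$; its total support sits inside $\sigma(S)=S$, of size $s'\le s$. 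A one-step monotonicity argument---appending $s-s'$ fresh largest symbols to every $A_i^{(d)}$ of an extremal system on $s'$ points---shows $N_B(d,s')\le N_B(d,s)$, whence $|I_\sigma|\le N_B(d,s)$. Dividing by $s!$ yields the stated inequality.

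For tightness I would start with an extremal ordered Bollob\'{a}s system $\mathcal{B}^{*}=\{(B_i^{(1)},\dots,B_i^{(d)})\}_{1\le i\le N_B(d,s)}$ on $[s]$ and, after sliding each missing element into the unique block compatible with the order $B_i^{(1)}<\cdots<B_i^{(d)}$, assume that every tuple is a \emph{full} $d$-partition of $[s]$. Then I would form the orbit-closure
\[\mathcal{D}^{*}=\bigcup_{\sigma\in\mathrm{Sym}([s])}\bigl\{(\sigma(B_i^{(1)}),\dots,\sigma(B_i^{(d)})):1\le i\le N_B(d,s)\bigr\}.\]
By orbit-stabilizer, the $\mathrm{Sym}([s])$-orbit of $(B_i^{(1)},\dots,B_i^{(d)})$ has size $\binom{s}{|B_i^{(1)}|,\dots,|B_i^{(d)}|}$, so each orbit contributes exactly $1$ to the weighted sum, and distinct tuples of $\mathcal{B}^{*}$ have distinct size vectors (an ordered full partition is determined by its size vector), so their orbits are disjoint. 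The total weighted sum over $\mathcal{D}^{*}$ thus equals $N_B(d,s)$.

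\textbf{The main obstacle} is verifying that $\mathcal{D}^{*}$ remains a Bollob\'{a}s system. Pairs inside one orbit reduce, after pre-composing by $\sigma^{-1}$, to showing that $B_i$ and $\pi\cdot B_i$ Bollob\'{a}s-intersect for every $\pi$ outside the stabiliser of $(B_i^{(r)})_r$, which I would handle by a pigeonhole argument on a symbol moved by $\pi$. Pairs between different orbits reduce similarly to checking that $B_j$ and $\pi\cdot B_i$ Bollob\'{a}s-intersect for every $\pi\in\mathrm{Sym}([s])$; here the fullness of the partitions together with the structural richness forced by the extremality of $\mathcal{B}^{*}$ should prevent $\pi$ from simultaneously destroying all candidate intersections $\pi(B_i^{(p)})\cap B_j^{(q)}$ with $p<q$. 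The small cases $d=2$ (the classical $\binom{[n]}{k}$ construction realising $N_B(2,s)=1$) and $d=3$ (Theorem~\ref{main-Bollobas-d=3}, where $N_B(3,s)=\lfloor s/2\rfloor+1$) fit this template and should guide the general structural argument.
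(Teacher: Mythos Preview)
Your inequality argument and your tightness construction both coincide with the paper's: apply Lemma~\ref{main-lemma} with $e=1$ and bound each $|I_\sigma|$ by $N_B(d,s)$ (your monotonicity remark makes explicit a step the paper leaves implicit), and for tightness take the orbit-closure $\mathcal{D}^*$, which is precisely the paper's $\bigcup_l\mathcal{B}_l$---the family of all full $d$-partitions of $[s]$ whose size vector matches some member of $\mathcal{B}^*$.

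The gap is in your inter-orbit verification. You invoke ``the structural richness forced by the extremality of $\mathcal{B}^{*}$'', but extremality plays no role here; any full ordered Bollob\'{a}s system $\mathcal{B}^*$ would work. The correct lever is simply that $\mathcal{B}^*$ is itself a Bollob\'{a}s system, which forces a \emph{prefix-sum crossing}: if two distinct ordered full $d$-partitions $(B_j^{(r)})_r$ and $(B_k^{(r)})_r$ of $[s]$ satisfied $\sum_{t\le r}|B_j^{(t)}|\ge\sum_{t\le r}|B_k^{(t)}|$ for every $r$, then $B_k^{(p)}\cap B_j^{(q)}=\emptyset$ for all $p<q$, contradicting the Bollob\'{a}s property of $\mathcal{B}^*$. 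The paper's Case~2 uses this as follows: for two members of $\mathcal{D}^*$ with different size vectors, locate the first coordinate $v$ where the sizes differ (say $|A_j^{(v)}|>|A_k^{(v)}|$); fullness gives $A_j^{(v)}\cap A_k^{(q_1)}\neq\emptyset$ for some $q_1>v$, and the impossibility of prefix-sum domination (inherited from $\mathcal{B}^*$) guarantees a $w>v$ with $\sum_{t=v}^{w}|A_j^{(t)}|<\sum_{t=v}^{w}|A_k^{(t)}|$, whence $A_k^{(p)}\cap A_j^{(q_2)}\neq\emptyset$ for some $v\le p\le w<q_2$. Your same-orbit pigeonhole idea is fine (it is the paper's Case~1), but for the different-orbit case you need this prefix-sum argument rather than an appeal to extremality.
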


\begin{proof}
Applying Lemma \ref{main-lemma} with $e=1$, we have
$$s!\sum_{i=1}^{m}\binom{\sum_{r=1}^{d}|A_i^{(r)}|}{|A_i^{(1)}|,\dots,|A_i^{(d)}|}^{-1}=\sum_{\sigma\in G(S)}|I_{\sigma}|\le s!N_B(d,s).$$
Dividing  both sides by $s!$ yields the desired inequality. To prove that the upper bound is tight, we shall construct a full Bollob\'{a}s system of $d$-partitions of $[s]$, $\mathcal{D}=\{(A_i^{(1)},\ldots,A_i^{(d)})\}_{1\le i\le m}$, such that
\begin{align}\label{eq:1}
\sum_{i=1}^{m}\binom{\sum_{r=1}^{d}|A_i^{(r)}|}{|A_i^{(1)}|,\dots,|A_i^{(d)}|}^{-1}= N_B(d,s).
\end{align}

According to the definition of $N_B(d,s)$, there exists a Bollob\'{a}s system of $d$-partitions of $[s]$, $\mathcal{B}=\{(B_l^{(1)},\ldots, B_l^{(d)})\}_{1\le l\le N_B(d,s)}$, such that $B_l^{(1)}<B_l^{(2)}<\cdots<B_l^{(d)}$ for every $l\in[N_B(d,s)]$. Without loss of generality, we assume that $\mathcal B$ is full. We call $(|B_l^{(1)}|,\ldots, |B_l^{(d)}|)$ the type of $(B_l^{(1)},\ldots, B_l^{(d)})$. Since $\mathcal B$ is full, all the members in $\mathcal{B}$ have different types. For each $(B_l^{(1)},\ldots, B_l^{(d)})\in \mathcal{B}$, $l\in[N_B(d,s)]$, let $\mathcal{B}_l$ be the set of all full $d$-partitions of $[s]$ which have the same type as $(B_l^{(1)},\ldots, B_l^{(d)})$. Let $\mathcal{D}=\bigcup_{l=1}^{N_B(d,s)}\mathcal{B}_l$. We will demonstrate that $\mathcal D$ is a full Bollob\'{a}s system satisfying \eqref{eq:1} (here every element in $\mathcal D$ can be renamed by some $(A_i^{(1)},\ldots,A_i^{(d)})$).

Since for each $1\leq l\leq N_B(d,s)$,
$$|\mathcal{B}_l|=\binom{s}{|B_l^{(1)}|,\ldots,|B_l^{(d)}|}=\binom{\sum_{r=1}^{d}|B_l^{(r)}|}{|B_l^{(1)}|,\ldots,|B_l^{(d)}|},$$
we have
$$\sum_{l=1}^{N_B(d,s)}|\mathcal{B}_l|
\binom{\sum_{r=1}^{d}|B_l^{(r)}|}{|B_l^{(1)}|,\dots,|B_l^{(d)}|}^{-1}=
\sum_{l=1}^{N_B(d,s)}1=N_B(d,s).$$
Therefore, \eqref{eq:1} holds. It remains to show that $\mathcal{D}$ is a Bollob\'{a}s system of full $d$-partitions of $[s]$. In the following we assume that $(A_j^{(1)},\dots,A_j^{(d)})$ and $(A_k^{(1)},\dots,A_k^{(d)})$ with $j\ne k$ are both from $\mathcal D$ (note that every element in $\mathcal D$ has be renamed by some $(A_i^{(1)},\ldots,A_i^{(d)})$).

Case $1$: Suppose that $(A_j^{(1)},\ldots,A_j^{(d)})$ and $(A_k^{(1)},\ldots,A_k^{(d)})$ are with the same type. That is, $|A_j^{(r)}|=|A_k^{(r)}|$ for every $r\in[d]$. Then there exists the smallest integer $u$ in $[d]$ such that $A_j^{(u)}\ne A_k^{(u)}$. So $A_j^{(u)}\cap A_k^{(q_1)}\ne \emptyset$ and $A_k^{(u)}\cap A_j^{(q_2)}\ne \emptyset$ for some $q_1$ and $q_2$ such that $1\leq u<q_1,q_2\leq d$.
	
Case $2$: Suppose that $(A_j^{(1)},\ldots,A_j^{(d)})$ and $(A_k^{(1)},\ldots,A_k^{(d)})$ are with different types. Let $v\in[d]$ be the smallest integer such that $|A_j^{(v)}|\ne|A_k^{(v)}|$. Without loss of generality, we assume that $|A_j^{(v)}|>|A_k^{(v)}|$. If there exists some $u<v$ such that $A_j^{(u)}\ne A_k^{(u)}$ and $u$ is the smallest integer satisfying this condition, we get the desired conclusion with the same proof as that for Case $1$. Otherwise, $A_j^{(u)}=A_k^{(u)}$ holds for every $u<v$. We deal with this case below.

Since $|A_j^{(v)}|>|A_k^{(v)}|$, there exists a $q_1>v$ such that $A_j^{(v)}\cap A_k^{(q_1)}\ne\emptyset$. If there exists some $w>v$ such that $|A_j^{(v)}|+|A_j^{(v+1)}|+\cdots+|A_j^{(w)}|<|A_k^{(v)}|+|A_k^{(v+1)}|+\cdots+|A_k^{(w)}|$, then there exist $v\leq p\le w$ and $w<q_2\le d$ such that $A_j^{(q_2)}\cap A_k^{(p)}\ne\emptyset$. Otherwise, $|A_j^{(v)}|+|A_j^{(v+1)}|+\cdots+|A_j^{(w)}|\geq|A_k^{(v)}|+|A_k^{(v+1)}|+\cdots+|A_k^{(w)}|$ holds for all $v\leq w\leq d$. It leads to that there are $(B_j^{(1)},\dots,B_j^{(d)}), (B_k^{(1)},\dots,B_k^{(d)})\in \mathcal{B}$, $j\ne k$, satisfying that $\sum_{t=1}^{r}|B_j^{(t)}|\geq\sum_{t=1}^{r}|B_k^{(t)}|$ for all $r\in[d]$. A contradiction occurs according to the definition of $\mathcal{B}$.
\end{proof}

It is easy to see that $N_B(2,s)=1$. Thus applying Theorem \ref{main-Bollobas} with $d=2$, we obtain Theorem \ref{B-pair}. Next we determine the value of $N_B(3,s)$.

\begin{Proposition}\label{Bollobas-d=3}
$$N_B(3,s)=\left\lfloor\frac{s}{2}\right\rfloor+1.$$
\end{Proposition}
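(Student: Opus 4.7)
First I would reduce to full partitions and set up an encoding. Following the first paragraph of the proof of Proposition~\ref{skew-n}, adding the elements of $S\setminus\bigcup_{r}A_j^{(r)}$ to the parts of $(A_j^{(1)},A_j^{(2)},A_j^{(3)})$ only enlarges the parts, and enlargement clearly preserves the Bollob\'as condition, so I may assume that $\mathcal{D}$ consists of full $3$-partitions of $[s]$. The ordering $A_i^{(1)}<A_i^{(2)}<A_i^{(3)}$ then forces each partition to be an interval decomposition of $[s]$, uniquely encoded by $(a_i,b_i)\in P:=\{(a,b):0\le a\le b\le s\}$ via $A_i^{(1)}=[1,a_i]$, $A_i^{(2)}=[a_i+1,b_i]$, $A_i^{(3)}=[b_i+1,s]$.

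Next I would recast the Bollob\'as condition as an antichain condition in $P$ (equipped with componentwise order). For each partition define $\pi_i:[s]\to\{1,2,3\}$ by $\pi_i(x)=r$ iff $x\in A_i^{(r)}$. Two immediate checks give: (i) $A_i^{(p)}\cap A_j^{(q)}\ne\emptyset$ for some $p<q$ iff $\pi_i(x)<\pi_j(x)$ for some $x$; and (ii) $\pi_i\le \pi_j$ pointwise iff $a_i\ge a_j$ and $b_i\ge b_j$. Combining these, the Bollob\'as condition on both ordered pairs $(i,j)$ and $(j,i)$ for $i\ne j$ is equivalent to the incomparability of $(a_i,b_i)$ and $(a_j,b_j)$ in $P$. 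Hence $N_B(3,s)$ equals the maximum size of an antichain in $P$.

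Finally I would bound such antichains and exhibit the extremal family. Sort any antichain by strictly increasing $a_i$ (distinctness follows from incomparability); incomparability then forces $b_1>b_2>\cdots>b_m$, so $a_i\ge i-1$ and $b_i\le s-i+1$. Combined with $a_i\le b_i$ this yields $i-1\le s-i+1$, hence $m\le\lfloor s/2\rfloor+1$. Tightness is witnessed by the explicit antichain $\{(i,s-i):0\le i\le\lfloor s/2\rfloor\}$, which corresponds to the family $\{([1,i],\,[i+1,s-i],\,[s-i+1,s]):0\le i\le\lfloor s/2\rfloor\}$. I expect the main obstacle to be the antichain reformulation in the second paragraph, especially unpacking the two-sided Bollob\'as condition into componentwise incomparability; after that, both the upper bound and the construction are short, elementary arguments.
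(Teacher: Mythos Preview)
Your proposal is correct and complete. The reduction to full partitions is sound (enlarging parts preserves nonempty intersections, and two distinct members of a Bollob\'as system cannot collapse to the same full partition since that would force all cross--part intersections to be empty), the encoding by pairs $(a_i,b_i)$ is clear, and both checks (i) and (ii) in your second paragraph go through exactly as stated. The antichain bound and the explicit extremal family are also correct.

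Your route, however, is genuinely different from the paper's. The paper never passes to the poset~$P$ or to an antichain formulation; instead it argues directly about the quantity $|A_i^{(2)}|$. After reducing to full partitions, the paper shows by a short case analysis that no two members of~$\mathcal{D}$ can have $|A^{(2)}|$ equal (Case~1) or differing by exactly~1 (Case~2); hence the values $|A_i^{(2)}|\in\{0,1,\dots,s\}$ are pairwise at distance~$\ge 2$, forcing $m\le\lfloor s/2\rfloor+1$. Your approach instead recognises that the two--sided Bollob\'as condition is precisely componentwise incomparability of the cut pairs $(a_i,b_i)$, and then bounds antichains in the staircase poset~$P$ by the standard ``sort one coordinate, the other must strictly decrease'' argument. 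The structural payoff of your version is that it explains \emph{why} the answer is $\lfloor s/2\rfloor+1$: it is the width of the poset $\{(a,b):0\le a\le b\le s\}$ under the product order. The paper's payoff is that its argument is entirely self--contained and avoids introducing the auxiliary poset, at the cost of a small ad~hoc case split. In your encoding the paper's invariant $|A^{(2)}|=b-a$ corresponds to slicing~$P$ by antidiagonals, so the two arguments are cousins, but the key lemmas (antichain bound vs.\ gap--$2$ condition on $b-a$) are distinct.
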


\begin{proof}
For $l\in[\lfloor\frac{s}{2}\rfloor+1]$, let $B_l^{(1)}=[1,l-1]$, $B_l^{(2)}=[l,s-l+1]$, and $B_l^{(3)}=[s-l+2,s]$. It is readily checked that $\mathcal{B}=\{(B_l^{(1)},B_l^{(2)},B_l^{(3)})\}_{1\le l\le \lfloor\frac{s}{2}\rfloor+1}$ forms a Bollob\'{a}s system of full $3$-partitions of $[s]$ such that $B_l^{(1)}<B_l^{(2)}<B_l^{(3)}$ for every $l\in[\lfloor\frac{s}{2}\rfloor+1]$. Hence, $N_B(3,s)\geq \lfloor\frac{s}{2}\rfloor+1$.

Let $\mathcal{D}=\{(A_i^{(1)},A_i^{(2)},A_i^{(3)})\}_{1\le i\le m}$ be a Bollob\'{a}s system of $3$-partitions of $[s]$ such that $A_i^{(1)}<A_i^{(2)}<A_i^{(3)}$ for every $i\in[m]$. Without loss of generality, assume that $\mathcal D$ is full. Suppose that $(A_j^{(1)},A_j^{(2)},A_j^{(3)})$ and $(A_k^{(1)},A_k^{(2)},A_k^{(3)})$ with $j\ne k$ are both from $\mathcal{D}$. Then all the possibilities are either (I) $A_j^{(1)}\cap A_k^{(2)}\ne \emptyset$ and $A_j^{(3)}\cap A_k^{(2)}\ne \emptyset$, or (II) $A_k^{(1)}\cap A_j^{(2)}\ne \emptyset$ and $A_k^{(3)}\cap A_j^{(2)}\ne \emptyset$.

Case $1$: $|A_j^{(2)}|=|A_k^{(2)}|$. If $A_j^{(1)}\cap A_k^{(2)}\ne \emptyset$, then $|A_j^{(1)}|\geq |A_k^{(1)}|+1$ and thus $|A_j^{(3)}|\leq |A_k^{(3)}|-1$, which implies that $A_j^{(3)}\cap A_k^{(2)}=\emptyset$. Hence (I) is impossible. If $A_k^{(1)}\cap A_j^{(2)}\ne \emptyset$, then $|A_k^{(1)}|\geq |A_j^{(1)}|+1$ and thus $|A_k^{(3)}|\leq |A_j^{(3)}|-1$, which implies that $A_k^{(3)}\cap A_j^{(2)}=\emptyset$. Hence (II) is also impossible, a contradiction. So for each $0\leq x\leq s$, there is at most one $(A_i^{(1)},A_i^{(2)},A_i^{(3)})\in \mathcal{D}$ with $|A_i^{(2)}|=x$.

Case $2$: $|A_j^{(2)}|+1=|A_k^{(2)}|$. If $A_j^{(1)}\cap A_k^{(2)}\ne \emptyset$, then $|A_j^{(1)}|\geq |A_k^{(1)}|+1$ and thus $|A_j^{(3)}|\leq |A_k^{(3)}|$, which implies that $A_j^{(3)}\cap A_k^{(2)}=\emptyset$. Hence (I) is impossible. If $A_k^{(1)}\cap A_j^{(2)}\ne \emptyset$, then $|A_k^{(1)}|\geq |A_j^{(1)}|+1$ and thus $|A_k^{(3)}|\leq |A_j^{(3)}|-2$, which implies that $A_k^{(3)}\cap A_j^{(2)}=\emptyset$. Hence (II) is impossible, a contradiction. So for each
$0\leq x\leq s-1$, at most one $3$-partition between $(A_j^{(1)},A_j^{(2)},A_j^{(3)})$ and $(A_k^{(1)},A_k^{(2)},A_k^{(3)})$ with $|A_j^{(2)}|=x$ and $|A_k^{(2)}|=x+1$ belongs to $\mathcal{D}$.

Combining Cases 1 and 2, we have $m\leq \lfloor\frac{s}{2}\rfloor+1$. Therefore, $N_B(3,s)=\lfloor\frac{s}{2}\rfloor+1$.
\end{proof}

\begin{proof}[Proof of Theorem $\ref{main-Bollobas-d=3}$.]
Apply Theorem \ref{main-Bollobas} and Proposition \ref{Bollobas-d=3} to complete the proof.
\end{proof}

Theorem \ref{main-Bollobas-d=3} implies that Conjecture \ref{conj} does not hold when $d=3$. Let $d\geq 4$ and $\mathcal{D}=\{(A_i^{(1)},\ldots, A_i^{(d)})\}_{1\le i\le m}$ be a Bollob\'{a}s system of $d$-partitions of $[n]$. Let $S=\bigcup_{i=1}^m\bigcup_{r=1}^d A_i^{(r)}$ and $s=|S|$. By Theorem \ref{main-Bollobas},
$$\sum_{i=1}^{m}\binom{\sum_{r=1}^{d}|A_i^{(r)}|}{|A_i^{(1)}|,\dots,|A_i^{(d)}|}^{-1}\le N_B(d,s).$$
Since $N_B(d,s)\geq N_B(3,s)=\lfloor\frac{s}{2}\rfloor+1>1$ for any $d\geq 4$, Conjecture \ref{conj}  would not be true for any $d\geq 4$.

\section{Strong Bollob\'{a}s systems}\label{section-ss}

This section gives proofs for Theorems \ref{d-for-symmetric} and \ref{e=1-d-for-symmetric}.

\begin{proof}[Proof of Theorem $\ref{d-for-symmetric}$.]
Applying Lemma $\ref{main-lemma}$, we have
\begin{align}\label{eq:4}
\sum_{i=1}^{m}\left(\prod_{k=1}^{e}s_k!\binom{\sum_{r=1}^{d}|A_i^{(r)}\cap X_k| }{|A_i^{(1)}\cap X_k|,\dots,|A_i^{(d)}\cap X_k|}^{-1}\right)=
\sum_{\sigma\in G(S)}|I_{\sigma}|.
\end{align}
We will show that
\begin{align}\label{eq:3}
\sum_{\sigma\in G(S)}|I_{\sigma}|\leq \left(\prod_{k=1}^{e}s_k!\right) \min_{l\in [e]}\frac{ \prod_{k=1}^{e}\binom{s_k+d-1}{d-1}}{\binom{s_l+d-1}{d-1}}.
\end{align}
Combining \eqref{eq:4} and \eqref{eq:3}, we can prove Theorem \ref{d-for-symmetric}.

To prove \eqref{eq:3}, we use induction on $e$. For $e=1$, since $N_{strong}(d,s_1)=1$ by Proposition \ref{strong-n}, we have
$$\sum_{\sigma\in G(S)}|I_{\sigma}|\leq s_1!N_{strong}(d,s_1)=s_1!.$$
Assume that \eqref{eq:3} holds for the case $e-1$ with $e\geq2$, and let us prove it for $e$. Suppose that $[n]$ is the disjoint union of $X_1,\ldots,X_e$. Let $\mathcal{D}=\{(A_i^{(1)},\dots$, $A_i^{(d)})\}_{1\le i\le m}$ be a strong Bollob\'{a}s system of $d$-partitions of $[n]$. Given any $i\in [m]$ and $l^*\in[e]$, let
\begin{align}
\mathcal{D}_{i,l^*}=&\{(A_j^{(1)}\cap ([n]\setminus X_{l^*}),\dots,A_j^{(d)}\cap ([n]\setminus X_{l^*}))\mid \nonumber\\
&~~~~~~(A_j^{(1)}\cap X_{l^*},\dots,A_j^{(d)}\cap X_{l^*})=(A_i^{(1)}\cap X_{l^*},\dots,A_i^{(d)}\cap X_{l^*}), j\in[m]\}.\nonumber
\end{align}
Then $\mathcal{D}_{i,l^*}$ is also a strong Bollob\'{a}s system of $d$-partitions of $[n]$. Let $R_{l^*}=(\bigcup_{i=1}^m\bigcup_{r=1}^d A_i^{(r)})\setminus X_{l^*}$. Applying induction hypothesis to $\mathcal{D}_{i,l^*}$ gives us
\begin{align*}
\sum_{\sigma\in G(R_{l^*})}|I_{\sigma}|&\leq \left(\prod_{k\in[e]\setminus\{l^*\}}s_k!\right)\min_{l\in [e]\setminus\{l^*\}}\frac{ \prod_{k\in[e]\setminus\{l^*\}}\binom{s_k+d-1}{d-1}}{\binom{s_l+d-1}{d-1}}.
\end{align*}
Therefore,
\begin{align}\label{eq:5}
\sum_{\sigma\in G(S)}|I_{\sigma}|&\leq \left(s_{l^*}!\binom{s_{l^*}+d-1}{d-1}\right)\cdot
\left(\prod_{k\in[e]\setminus\{l^*\}}s_k!\right)\min_{l\in [e]\setminus\{l^*\}}\frac{ \prod_{k\in[e]\setminus\{l^*\}}\binom{s_k+d-1}{d-1}}{\binom{s_l+d-1}{d-1}}\nonumber
\\
&=\left(\prod_{k\in[e]}s_k!\right)\min_{l\in [e]\setminus\{l^*\}}\frac{ \prod_{k\in[e]}\binom{s_k+d-1}{d-1}}{\binom{s_l+d-1}{d-1}}.
\end{align}
Since \eqref{eq:5} holds for any given $l^*\in [e]$, we obtain \eqref{eq:3}.
\end{proof}

\begin{proof}[Proof of Theorem $\ref{e=1-d-for-symmetric}$.]
Apply Theorem \ref{d-for-symmetric} with $e=1$. It remains to show that the upper bound is tight.

Let $\mathcal{A}=\{(A_i^{(1)},\dots,A_i^{(n)})\}_{1\le i\le m}$ be the set of all full $n$-partitions of $[n]$ such that $|A_i^{(r)}|=1$ for $i\in[m]$ and $r\in[n]$, and so $m=n!$. For each $1\leq i<j\leq m$, suppose $l$ is the smallest integer in $[n]$ such that $A_i^{(l)}\neq A_j^{(l)}$. Let $u_1=v_1=l$. Then there exist $u_2>u_1$ and $v_2>v_1$ such that $A_i^{(u_1)}\cap A_j^{(v_2)}\neq \emptyset$ and $A_i^{(u_2)}\cap A_j^{(v_1)}\neq \emptyset$. Thus $\mathcal{A}$ is a strong Bollob\'{a}s system of $n$-partitions of $[n]$ and we have
$$\sum_{i=1}^{m}\binom{\sum_{r=1}^{n}|A_i^{(r)}|}{|A_i^{(1)}|,\ldots,|A_i^{(d)}|}^{-1}=\sum_{i=1}^{n!}\binom{n}{1,\ldots,1}^{-1}=1.$$
This provides an example of a strong Bollob\'{a}s system attaining the upper bound.
\end{proof}

The case $e=2$ in Theorem \ref{d-for-symmetric} implies
	
\begin{equation}
\begin{aligned}
&\sum_{i=1}^{m}\left ( \binom{\sum_{r=1}^{d}|A_i^{{(r)} }\cap X_1| }{|A_i^{(1)}\cap X_1|,\ldots,|A_i^{(d)}\cap X_1|}\binom{\sum_{r=1}^{d}|A_i^{(r)}\cap X_2| }{|A_i^{(1)}\cap X_2|,\ldots,|A_i^{(d)}\cap X_2|}\right)^{-1}\\
\le &\min  \left\{\binom{s_1+d-1}{d-1},\binom{s_2+d-1}{d-1}\right\}\le \binom{\left\lfloor\frac{n}{2}\right\rfloor +d-1}{d-1},\nonumber
\end{aligned}
\end{equation}
and so we get the following result.
	
\begin{Theorem}\label{e=2-symmetric}
Suppose that $[n]$ is the union of two disjoint sets $X_1$ and $X_2$. Let $\mathcal{D}=\{(A_i^{(1)},\ldots,A_i^{(d)})\}_{1\le i\le m}$ be a strong Bollob\'{a}s system of $d$-partitions of $[n]$. Then
$$\sum_{i=1}^{m}\left ( \binom{\sum_{r=1}^{d}|A_i^{{(r)} }\cap X_1|}{|A_i^{(1)}\cap X_1|,\dots,|A_i^{(d)}\cap X_1|}\binom{\sum_{r=1}^{d}|A_i^{(r)}\cap X_2| }{|A_i^{(1)}\cap X_2|,\dots,|A_i^{(d)}\cap X_2|}\right)^{-1}\le \binom{\left \lfloor \frac{n}{2}  \right\rfloor +d-1}{d-1}.$$
\end{Theorem}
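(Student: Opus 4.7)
The plan is to derive Theorem~\ref{e=2-symmetric} as a direct specialization of Theorem~\ref{d-for-symmetric} combined with one elementary pigeonhole step. First I would apply Theorem~\ref{d-for-symmetric} with $e=2$, writing $s_k=|S\cap X_k|$ for $k\in\{1,2\}$ where $S=\bigcup_{i=1}^m\bigcup_{r=1}^d A_i^{(r)}$. The right-hand side of that theorem becomes
$$\min_{l\in\{1,2\}}\frac{\binom{s_1+d-1}{d-1}\binom{s_2+d-1}{d-1}}{\binom{s_l+d-1}{d-1}},$$
and a short case inspection (the $l=1$ term leaves $\binom{s_2+d-1}{d-1}$, the $l=2$ term leaves $\binom{s_1+d-1}{d-1}$) immediately reduces this to $\min\{\binom{s_1+d-1}{d-1},\binom{s_2+d-1}{d-1}\}$.

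Next I would convert this $(s_1,s_2)$-dependent bound into an $n$-dependent bound. Since $X_1$ and $X_2$ are disjoint subsets of $[n]$ and $S\subseteq X_1\cup X_2$, the sizes satisfy $s_1+s_2\le n$, so by pigeonhole at least one of $s_1,s_2$ is at most $\lfloor n/2\rfloor$. Combined with the (elementary) monotonicity of $s\mapsto\binom{s+d-1}{d-1}$ in $s$, we conclude
$$\min\left\{\binom{s_1+d-1}{d-1},\binom{s_2+d-1}{d-1}\right\}\le \binom{\lfloor n/2\rfloor+d-1}{d-1},$$
which yields the desired inequality.

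There is essentially no obstacle to this proof: once Theorem~\ref{d-for-symmetric} is in hand, the entire argument is an algebraic simplification of a minimum over two indices followed by a one-line pigeonhole estimate. The only conceptual point worth flagging is that the bound $s_1+s_2\le n$ (rather than $s_1,s_2\le n$ individually) is precisely what makes $\lfloor n/2\rfloor$, not $n$, appear in the final answer; the improvement over the trivial bound $\binom{n+d-1}{d-1}$ comes entirely from this pigeonhole step, while the strong-Bollob\'as hypothesis is absorbed into the invocation of Theorem~\ref{d-for-symmetric}.
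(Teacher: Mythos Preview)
Your proposal is correct and follows essentially the same route as the paper: apply Theorem~\ref{d-for-symmetric} with $e=2$, simplify the minimum to $\min\{\binom{s_1+d-1}{d-1},\binom{s_2+d-1}{d-1}\}$, and then use $s_1+s_2\le n$ together with monotonicity to replace the smaller $s_k$ by $\lfloor n/2\rfloor$. The paper presents this in two lines immediately before the theorem; your write-up merely makes the pigeonhole step explicit.
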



\section{Weak Bollob\'{a}s systems}\label{section-weak}

In this section we prove Theorems \ref{improve-weak} and \ref{weak-sharp}.

\begin{proof}[Proof of Theorem $\ref{improve-weak}$.]
Applying Lemma $\ref{main-lemma}$, we have
\begin{align*}
\sum_{i=1}^{m}\left(\prod_{k=1}^{e}s_k!\binom{\sum_{r=1}^{d}|A_i^{(r)}\cap X_k|}{|A_i^{(1)}\cap X_k|,\ldots,|A_{i}^{(d)}\cap X_{k}|}\right)^{-1}=
\sum_{\sigma\in G(S)}|I_{\sigma}| \le \left(\prod_{k=1}^{e}s_k!\right)\left(\prod_{k=1}^{e}\binom{s_k+d-1}{d-1}\right).
\end{align*}
Dividing both sides by $\prod_{k=1}^{e}s_k!$ yields the desired upper bound. Furthermore, since a skew Bollob\'{a}s system is a weak Bollob\'{a}s system, the upper bound is tight by Theorem \ref{X-skew-d-partition}.
\end{proof}

\begin{proof}[Proof of Theorem $\ref{weak-sharp}$.]
For any given positive integers $a_1,\ldots,a_d$, let $n=a_1+a_2+\cdots+a_d-1$. We define a family $\mathcal{D}$ of $d$-partitions of $[n]$ as follows:
\begin{align*}
\mathcal{D}=&(\mathcal{D}_{a_1,a_2,\ldots,a_{d-1},0}\cup \mathcal{D}_{a_1,a_2,\ldots,a_{d-1},1}\cup\cdots\cup  \mathcal{D}_{a_1,a_2,\ldots,a_{d-1},a_d-1})\cup\nonumber\\
&~~~~~~(\mathcal{D}_{a_1,\ldots,a_{d-2},0,a_d}\cup\mathcal{D}_{a_1,\ldots,a_{d-2},1,a_d}\cup\cdots\cup \mathcal{D}_{a_1,\ldots,a_{d-2},a_{d-1}-1,a_d}) \cup\nonumber\\
&~~~~~~\cdots\cup(\mathcal{D}_{0,a_2,\ldots,a_d}\cup
\mathcal{D}_{1,a_2,\ldots,a_d}\cup\cdots\cup\mathcal{D}_{a_1-1,a_2,\ldots,a_d}),
\end{align*}
where for every $0\le k_d\le a_d-1$,
\begin{align*}
\mathcal{D}_{a_1,a_2,\ldots,a_{d-1},k_d}=&\{(A^{(1)},\ldots, A^{(d)}) \mid A^{(1)},\ldots,A^{(d)} \text{ form a disjoint union that is } [n+1-a_d+k_d],\\
& ~~~~~~ |A^{(r)}|=a_r \text{ for } r\in[d]\setminus\{d\}, |A^{(d)}|=k_d, \\
& ~~~~~~ \text{ and } n+1-a_d+k_d \in \cup_{r\in[d]\setminus\{d\}} A^{(r)}\},
\end{align*}
for every $0\le k_{d-1}\le a_{d-1}-1$,
\begin{align*}
\mathcal{D}_{a_1,\ldots,a_{d-2},k_{d-1},a_d}=&\{(A^{(1)},\ldots, A^{(d)}) \mid A^{(1)},\ldots,A^{(d)} \text{ form a disjoint union that is }  \\
& ~~~~[n+1-a_{d-1}+k_{d-1}], |A^{(r)}|=a_r \text{ for } r\in[d]\setminus\{d-1\}, |A^{(d-1)}|=k_{d-1}, \\
& ~~~~ \text{ and } n+1-a_{d-1}+k_{d-1}\in \cup_{r\in[d]\setminus\{d-1\}}A^{(r)} \},
\end{align*}
$\cdots$, and for every $0\le k_1\le a_1-1$,
\begin{align*}
\mathcal{D}_{k_1,a_2,\ldots,a_d}=&\{(A^{(1)},\ldots, A^{(d)}) \mid A^{(1)},\ldots,A^{(d)} \text{ form a disjoint union that is } [n+1-a_1+k_1],\\
& ~~~~~~ |A^{(r)}|=a_r \text{ for } r\in[d]\setminus\{1\}, |A^{(1)}|=k_1, \\
& ~~~~~~  \text{ and } n+1-a_1+k_1\in \cup_{r\in[d]\setminus\{1\}}A^{(r)}\}.
\end{align*}

\begin{Claim}
$\mathcal{D}$ is a weak Bollob\'{a}s system of $d$-partitions of $[n]$.
\end{Claim}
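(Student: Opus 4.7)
The plan is to argue by contradiction and exploit the structure of the subfamilies $\mathcal{D}_{a_1,\ldots,k_r,\ldots,a_d}$ together with the distinguished position of the element $n+1-a_r+k_r$. First, I would observe that failing the weak Bollob\'{a}s condition for a pair $A,B\in\mathcal{D}$ is equivalent to $A^{(p)}\cap B^{(q)}=\emptyset$ for all $p\ne q$ (each violation at some $p<q$ forbids both cross-intersections simultaneously). Also, each member of $\mathcal{D}$ has ground set $T:=\bigcup_{r=1}^d A^{(r)}$ equal to an initial segment $[s]$ of $[n]$, with $s=n+1-a_r+k_r$ when the member lies in $\mathcal{D}_{a_1,\ldots,k_r,\ldots,a_d}$.

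Next, assume distinct $A,B\in\mathcal{D}$ violate weak Bollob\'{a}s. Since the ground sets $T_A,T_B$ are initial segments, they are comparable; WLOG $T_A\subseteq T_B$. Any $x\in A^{(p)}\subseteq T_B$ lies in some $B^{(q)}$, and $A^{(p)}\cap B^{(q)}=\emptyset$ for $p\ne q$ forces $q=p$. Hence $A^{(p)}\subseteq B^{(p)}$ for every $p\in[d]$. I would then split into two cases according to whether $A$ and $B$ come from subfamilies with the same ``reduced coordinate''.

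If $A\in\mathcal{D}_{\ldots,k_r,\ldots}$ and $B\in\mathcal{D}_{\ldots,k'_r,\ldots}$ share the same $r$, then $|A^{(p)}|=|B^{(p)}|=a_p$ for all $p\ne r$, so in fact $A^{(p)}=B^{(p)}$ for $p\ne r$. Since $T_A\subseteq T_B$ we have $k_r\le k'_r$; the equality case forces $A=B$, contradicting distinctness, while $k_r<k'_r$ gives $s_B=n+1-a_r+k'_r>s_A$, yet the defining condition for $\mathcal{D}_{\ldots,k'_r,\ldots}$ requires $s_B\in\bigcup_{p\ne r}B^{(p)}=\bigcup_{p\ne r}A^{(p)}\subseteq[s_A]$, a contradiction. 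If instead $A\in\mathcal{D}_{\ldots,k_r,\ldots}$ and $B\in\mathcal{D}_{\ldots,k'_{r'},\ldots}$ with $r\ne r'$, then $|A^{(r')}|=a_{r'}>k'_{r'}=|B^{(r')}|$, directly contradicting $A^{(r')}\subseteq B^{(r')}$. The only obstacle is keeping the indexing of the subfamilies straight; once the size data and the ``largest-element lies outside the reduced part'' condition are pinned down, both cases close in a single line.
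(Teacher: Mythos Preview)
Your argument is correct, and it is organized differently from the paper's proof. The paper proceeds directly, splitting into three cases (same subfamily; same reduced coordinate $r$ but different $k_r$; different reduced coordinates) and in each case explicitly exhibits a cross-intersection, using in the last case the pigeonhole count $|A_i^{(v)}|+\sum_{r\ne v}|A_j^{(r)}|=n+1>n$. You instead argue by contradiction and extract the single structural consequence $A^{(p)}\subseteq B^{(p)}$ for all $p$ once $T_A\subseteq T_B$; this unifies the analysis, so that the different-$r$ case collapses to the one-line size comparison $|A^{(r')}|=a_{r'}>k'_{r'}=|B^{(r')}|$, and the same-$r$ case is dispatched by the largest-element condition together with $\bigcup_{p\ne r}B^{(p)}=\bigcup_{p\ne r}A^{(p)}\subseteq[s_A]$. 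Your route is a bit more conceptual and avoids the three-way split; the paper's route is more explicit about where the actual cross-intersection lives. Both rely on the same two pieces of data (the size profile and the location of the top element), just packaged differently.
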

\begin{proof}
Let $(A_i^{(1)},\ldots,A_i^{(d)})$ and $(A_j^{(1)},\ldots,A_j^{(d)})$ be two members of $\mathcal{D}$ with $i\ne j$. We shall show that there exist $1\le p < q\le d$ such that $A_i^{(p)}\cap A_j^{(q)}\ne\emptyset$ or $A_j^{(p)}\cap A_i^{(q)}\ne\emptyset$.

Case 1: Suppose that $(A_i^{(1)},\ldots,A_i^{(d)})$ and $(A_j^{(1)},\ldots,A_j^{(d)})\in \mathcal{D}_{a_1,\ldots,k_u,\ldots,a_d}$ for a fixed $k_u$ with $u\in[d]$. Since $\bigcup_{r=1}^{d}A_i^{(r)}=\bigcup_{r=1}^{d}A_j^{(r)}$ and $|A_i^{(r)}|=|A_j^{(r)}|$ for every $r\in[d]$, it is easy to see that $\mathcal{D}_{a_1,\ldots,k_u,\ldots,a_d}$ is a strong Bollob\'{a}s system of $d$-partitions of $[n]$. So there exist $1\le p < q\le d$ such that $A_i^{(p)}\cap A_j^{(q)}\ne\emptyset$ or $A_j^{(p)}\cap A_i^{(q)}\ne\emptyset$.

Case 2: Suppose that $(A_i^{(1)},\ldots,A_i^{(d)})\in \mathcal{D}_{a_1,\ldots,k_u=u_1,\ldots,a_d}$, $(A_j^{(1)},\ldots,A_j^{(d)})\in\mathcal{D}_{a_1,\ldots,k_u=u_2,\ldots,a_d}$, where $0\leq u_1<u_2\leq a_u-1$ and $u\in[d]$. By the definition of $\mathcal{D}_{a_1,\ldots,k_u=u_2,\ldots,a_d}$, there exists some $v\ne u$ such that $n+1-a_u+u_2 \in A_j^{(v)}$. By the definition of $\mathcal{D}_{a_1,\ldots,k_u=u_1,\ldots,a_d}$, we have $A_{i}^{(v)}\subseteq [n+1-a_u+u_1]$, which concludes to $n+1-a_u+u_2 \notin A_{i}^{(v)}$ due to $u_1<u_2$. So $A_i^{(v)}\neq A_j^{(v)}$. Note that $|A_i^{(v)}|=|A_j^{(v)}|$ and $A_i^{(v)}\subset [n+1-a_u+u_2]=\bigcup_{r=1}^{d}A_j^{(r)}$. Then we get $A_i^{(v)}\cap A_j^{(q)}\ne\emptyset$ for some $q\in[d]\setminus\{v\}$.

Case 3: Suppose that $(A_i^{(1)},\ldots,A_i^{(d)})\in\mathcal{D}_{a_1,\ldots,k_u,\ldots,a_d}$ and $(A_j^{(1)},\ldots,A_j^{(d)})\in\mathcal{D}_{a_1,\ldots,k_v,\ldots,a_d}$, where $1\leq v<u\leq d$. Since $|A_i^{(v)}|+(|A_j^{(1)}|+\cdots+|A_j^{(v-1)}|+|A_j^{(v+1)}|+\cdots+|A_j^{(d)}|)=a_1+\cdots+a_d=n+1>n$, we have $A_i^{(v)}\cap(A_j^{(1)}\cup\cdots\cup A_j^{(v-1)}\cup A_j^{(v+1)}\cup\cdots\cup A_j^{(d)})\ne\emptyset$, which implies that $A_i^{(v)}\cap A_j^{(q)}\ne\emptyset$ for some $q\in[d]\setminus\{v\}$.
\end{proof}

We shall show that $\mathcal D$ satisfies the desired property in Theorem \ref{weak-sharp}. By the definition of $\mathcal D$, we have
\begin{align*}
|\mathcal{D}_{a_1,\ldots,a_{d-1},k_d}|=&\binom{n-a_d+k_d}{a_1-1,a_2,\ldots,a_{d-1},k_d}+\binom{n-a_d+k_d}{a_1,a_2-1,a_3,\ldots,a_{d-1},k_d}+\cdots\nonumber\\
&+\binom{n-a_d+k_d}{a_1,\ldots,a_{d-2},a_{d-1}-1,k_d},\\
|\mathcal{D}_{a_1,\ldots,a_{d-2},k_{d-1},a_d}|=&\binom{n-a_{d-1}+k_{d-1}}{a_1-1,a_2,\ldots,a_{d-2},k_{d-1},a_d}+\binom{n-a_{d-1}+k_{d-1}}{a_1,a_2-1,a_3,\ldots,a_{d-2},k_{d-1},a_d}+\cdots\nonumber\\
&+\binom{n-a_{d-1}+k_{d-1}}{a_1,\ldots,a_{d-2},k_{d-1},a_d-1},\\
&\cdots,\\
|\mathcal{D}_{k_1,a_2,\ldots,a_d}|=&\binom{n-a_1+k_1}{k_1,a_2-1,a_3,\ldots,a_d}+\binom{n-a_1+k_1}{k_1,a_2,a_3-1,a_4,\ldots,a_d}+\cdots\nonumber\\
&+\binom{n-a_1+k_1}{k_1,a_2,\ldots,a_{d-1},a_d-1}.
\end{align*}
Therefore, it suffices to show that
\begin{align*}
&\sum_{k_1=0}^{a_1-1}\left(\binom{n-a_1+k_1}{k_1,a_2-1,a_3,\ldots,a_d}+\binom{n-a_1+k_1}{k_1,a_2,a_3-1,a_4,\ldots,a_d}+\cdots\nonumber\right.\\
& ~~~~~~ \left.+\binom{n-a_1+k_1}{k_1,a_2,\ldots,a_{d-1},a_d-1}\right) p_1^{k_1}p_2^{a_2}\cdots p_d^{a_d}+\\
&\sum_{k_2=0}^{a_2-1}\left(\binom{n-a_2+k_2}{a_1-1,k_2,a_3,\ldots,a_d}+\binom{n-a_2+k_2}{a_1,k_2,a_3-1,a_4,\ldots,a_d}+\cdots\nonumber\right.\\
& ~~~~~~ \left.+\binom{n-a_2+k_2}{a_1,k_2,a_3,\ldots,a_{d-1},a_d-1}\right)p_1^{a_1}p_2^{k_2}p_3^{a_3}\cdots p_d^{a_d}+\cdots+\\
&\sum_{k_d=0}^{a_d-1}\left(\binom{n-a_d+k_d}{a_1-1,a_2,\ldots,a_{d-1},k_d}+\binom{n-a_d+k_d}{a_1,a_2-1,a_3,\ldots,a_{d-1},k_d}+\cdots\nonumber\right.\\
& ~~~~~~ \left.+\binom{n-a_d+k_d}{a_1,\ldots,a_{d-2},a_{d-1}-1,k_d}\right)p_1^{a_1}\cdots p_{d-1}^{a_{d-1}}p_d^{k_d}=1.
\end{align*}

Suppose that, in the beginning, there are $d$ pockets labeled $1,2,\ldots,d$, each containing a specific number of matches denoted by $a_1,a_2,\ldots ,a_d$, respectively, and in every step we choose one match from the pocket $r$, $r\in [d]$, with probability $p_r$, until $d-1$ of all the $d$ pockets become empty. Let these steps be pairwise independent. If after these steps, all the pockets except for the pocket $1$ become empty, then the last match was taken from one of the $d-1$ pockets $2,3,\ldots,d$, and hence, after these steps, the pocket $1$ contains $a_1-k_1$ (for some $0\le k_1\le a_1-1$) matches (and all of the other pockets are empty) with probability
\begin{align*}
&\left(\binom{n-a_1+k_1}{k_1,a_2-1,a_3,\ldots,a_d}+
\binom{n-a_1+k_1}{k_1,a_2,a_3-1,a_4,\ldots,a_d}+\cdots+\right.\\
&~~~~~~~~~~~~\left.\binom{n-a_1+k_1}{k_1,a_2,\ldots,a_{d-1},a_d-1}\right)p_1^{k_1}p_2^{a_2}\cdots p_d^{a_d}.
\end{align*}
If after these steps, all the pockets except for the pocket $2$ become empty, then the last match was taken from one of the $d-1$ pockets $1,3,\ldots,d$, and hence, after these steps, the pocket $2$ contains $a_2-k_2$ (for some $0\le k_2\le a_2-1$) matches (and all of the other pockets are empty) with probability
\begin{align*}
&\left(\binom{n-a_2+k_2}{a_1-1,k_2,a_3,\ldots,a_d}+\binom{n-a_2+k_2}{a_1,k_2,a_3-1,a_4,\ldots,a_d}+\cdots+\nonumber\right.\\
&\left.\binom{n-a_2+k_2}{a_1,k_2,a_3,\ldots,a_{d-1},a_d-1}\right)p_1^{a_1}p_2^{k_2}p_3^{a_3}\cdots p_d^{a_d}.
\end{align*}
Similarly, we have
\begin{align*}
&\left(\binom{n-a_3+k_3}{a_1-1,a_2,k_3,a_4,\ldots,a_d}+
\binom{n-a_3+k_3}{a_1,a_2-1,k_3,a_4,\ldots,a_d}+\cdots+\right.\\
&\left.\binom{n-a_3+k_3}{a_1,a_2,k_3,a_4,\ldots,a_{d-1},a_d-1}\right)p_1^{a_1}p_2^{a_2}p_3^{k_3}\cdots p_d^{a_d};
\end{align*}
$$\cdots;$$
\begin{align*}
&\left(\binom{n-a_d+k_d}{a_1-1,a_2,\ldots,a_{d-1},k_d}+\binom{n-a_d+k_d}{a_1,a_2-1,a_3,\ldots,a_{d-1},k_d}+\cdots+\nonumber\right.\\
&\left.\binom{n-a_d+k_d}{a_1,\ldots,a_{d-2},a_{d-1}-1,k_d}\right)p_1^{a_1}\cdots p_{d-1}^{a_{d-1}}p_d^{k_d}.
\end{align*}
Obviously, the sum of these probabilities must be equal to $1$.
\end{proof}

\section{Concluding remarks}\label{section-concluding}

This paper investigates five kinds of systems of $d$-partitions of $[n]$, including Symmetric Bollob\'{a}s systems, strong Bollob\'{a}s systems, Bollob\'{a}s systems, skew Bollob\'{a}s systems, and weak Bollob\'{a}s systems. Many known results on variations of Bollob\'{a}s systems are unified. As a corollary of Theorem \ref{main-Bollobas}, Theorem \ref{main-Bollobas-d=3} provides a negative answer to Conjecture \ref{conj}. To give more applications of Theorem \ref{main-Bollobas}, one needs to determine the values of $N_B(d,s)$ for $d\geq 4$.

Even though Conjecture \ref{conj} does not hold for general Bollob\'{a}s systems of $d$-partitions of $[n]$, Theorem \ref{e=1-d-for-symmetric} shows that Conjecture \ref{conj} holds for strong Bollob\'{a}s systems.


\end{document}